\def\tb{\textbf}
\def\G{{\mathcal{G}}}
\def\O{{\mathcal{O}}}
\def\Q{{\mathbb{Q}}}
\def\ol{\overline}
\def\rank{{\mathrm{rank}}}
\theoremstyle{plain}
\newtheorem{thm}{Theorem}[section]
\newtheorem{cor}[thm]{Corollary}
\newtheorem{prop}[thm]{Proposition}
\newtheorem{lem}[thm]{Lemma}
\theoremstyle{definition}
\theoremstyle{remark}
\newtheorem{rem}[thm]{Remark}
\newtheorem*{acknowledgement}{Acknowledgments}
\begin{document}

\title[Abundance for 3-folds with non-trivial Albanese maps]{Abundance for non-uniruled 3-folds with non-trivial Albanese maps in positive characteristics}

\address{Lei Zhang\\School of Mathematical Science\\University of Science and Technology of China\\Hefei 230026,\\P.R.China}
\email{zhlei18@ustc.edu.cn, zhanglei2011@snnu.edu.cn}
\author{Lei Zhang}
\thanks{2010 \emph{Mathematics Subject Classification}: 14E05, 14E30.}
\thanks{\emph{Keywords}: Kodaira dimension; abundance; minimal model; Albanese map.}
\maketitle

\begin{abstract}
In this paper, we prove abundance for non-uniruled 3-folds with non-trivial Albanese maps, over an algebraically closed field of characteristic $p > 5$. As an application we get a characterization of abelian 3-folds.
\end{abstract}

\section{Introduction}

Over an algebraically closed field of characteristic $p > 5$, existence of minimal models of 3-folds has been proved by Birkar, Hacon and Xu (\cite{Bir16} \cite{HX15}).
A natural problem is abundance: for a minimal klt pair $(X, B)$, is $K_X + B$ semi-ample?
The answer is positive when $K_X + B$ is big or $B$ is big (\cite{Bir16} \cite{CTX15} \cite{Xu15}).
This paper proves abundance for a non-uniruled 3-folds with non-trivial Albanese maps.
\begin{thm}\label{abundance}
Let $X$ be a $\Q$-factorial, projective, non-uniruled 3-fold, over an algebraically closed field of characteristic $p >5$. Let $B$ be an effective $\Q$-divisor on $X$. Assume that

(1) $(X, B)$ is a minimal klt pair; and

(2) the Albanese map $a_X: X \rightarrow A_X$ is non-trivial.\\
Then $K_X + B$ is semi-ample.
\end{thm}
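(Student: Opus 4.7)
The plan is to split by the numerical dimension $\nu := \nu(K_X + B) \in \{0,1,2,3\}$. Since $(X,B)$ is minimal klt, $K_X+B$ is nef, so $\nu$ is well-defined. The case $\nu = 3$ (i.e.\ $K_X + B$ big) is already covered by \cite{Bir16, CTX15, Xu15}, so the real work lies in $\nu \in \{0,1,2\}$, where the non-trivial Albanese map must be exploited.

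The first step is to analyze the Stein factorization of $a_X : X \to A_X$: writing $Z := a_X(X) \subseteq A_X$ and factoring $a_X$ as $X \xrightarrow{f} Y \to Z$ with $f$ having connected fibers, there are three subcases according to $\dim Z \in \{1,2,3\}$. If $\dim Z = 3$, then $X$ has maximal Albanese dimension and $a_X$ is generically finite; one appeals to positive-characteristic structure theorems for varieties generically finite over abelian varieties (Hacon--Patakfalvi and subsequent work) together with minimality to deduce $\kappa(K_X+B) = \nu(K_X+B)$, from which semi-ampleness can be extracted. If $\dim Z = 1$, $Y$ is a smooth curve of genus $\geq 1$ and $f$ is a fibration with surface fibers. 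If $\dim Z = 2$, the normalization of $Y$ is a surface with numerically trivial canonical class and $f$ fibers $X$ by curves.

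For the subcases $\dim Z \in \{1,2\}$, I would reduce abundance on $X$ to (a) abundance on the general fiber $F$ of $f$, which is a minimal klt curve or surface and thus semi-ample, trivially or by Tanaka's surface abundance in char $p > 5$; and (b) positivity on the base. The positivity input is a canonical bundle formula $K_X + B \sim_\mathbb{Q} f^*(K_Y + B_Y + M_Y)$ with $M_Y$ a nef moduli part, available in positive characteristic via Patakfalvi-style weak positivity for $f_* \mathcal{O}_X(m(K_X+B))$. Since $Y$ maps non-trivially to an abelian variety and $K_Z \equiv 0$, this positivity, combined with abundance in dimensions $\leq 2$ applied to the base $Y$, should give semi-ampleness of $K_Y + B_Y + M_Y$ and hence of $K_X + B$ by descent along $f$.

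The main obstacle I anticipate is the case $\nu(K_X+B) = 0$. Here the Iitaka fibration is trivial and no effective divisors are available, so one must prove directly that numerical triviality implies $K_X + B \sim_\mathbb{Q} 0$. In characteristic $0$ this is a standard consequence of Kawamata's theorems on numerically trivial line bundles, but in characteristic $p$ there is a genuine $p$-torsion obstruction in $\Pic^\tau(X)$, and the non-triviality of $a_X$ must be leveraged to rule it out. Controlling this torsion --- likely via Frobenius-stability arguments combined with the structure of the Picard scheme of $X$ along the Albanese morphism --- is, I expect, the technical heart of the proof.
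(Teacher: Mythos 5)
Your skeleton (Stein factorization of $a_X$, case division by $\dim a_X(X)$, reduction to positivity on the base plus abundance on fibers) matches the paper's broad outline, but three of your key steps do not go through as stated. First, you never address inseparability of the Albanese map: in characteristic $p$ the morphism $a_X$ can be inseparable, in which case the geometric generic fiber of the Stein factorization $f: X \to Y$ is non-reduced and none of the fiberwise or base-change arguments apply. The paper spends an entire section on this (Theorem \ref{reduction}), using Ekedahl-style foliations $\F = \L^{\bot} \subset T_X$ and repeated quotients $X \to X/\F$ to reduce to the separable case, and even in the separable case it must perform Frobenius base changes on $Y$ (Remark \ref{bc-smthing}) before the geometric generic fiber becomes smooth. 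Second, the canonical bundle formula $K_X + B \sim_{\Q} f^*(K_Y + B_Y + M_Y)$ with a nef moduli part is not available in positive characteristic in the generality you invoke; the paper only has the much weaker statement $\kappa(X, K_{X/Z}) \geq 0$ for elliptic fibrations (Theorem \ref{rel-can-ellfib}), and replaces your ``descent along $f$'' by subadditivity of Kodaira dimensions for log divisors (Theorem \ref{Iit-conj}, Corollaries \ref{Iit-cor} and \ref{div-k=0}), which requires the delicate hypotheses there (sharp $F$-purity, Cartier index prime to $p$, elliptic Iitaka fibration of the fiber).

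Third, and most seriously, you correctly identify the numerically trivial case as the heart of the matter but propose no workable method for it: controlling $p$-torsion in $\mathrm{Pic}^{\tau}(X)$ via ``Frobenius-stability'' is not a known argument, and the paper does not attempt anything of the sort. Instead it organizes the whole proof by Kodaira dimension rather than numerical dimension: it first proves semi-ampleness whenever $\kappa(X, K_X+B) \geq 1$ (Theorem \ref{s-amp-k=2}, following Waldron), then uses subadditivity along the Albanese fibration to show $\kappa(X) \geq 0$ unconditionally, and in the residual case $\kappa(X) = 0$ reduces to an effective nef divisor $D$ (the image of $B$) on a minimal model $W$ with $K_W \sim_{\Q} 0$, proving directly that either $\kappa(W, D) \geq 1$ or $D = 0$ (e.g.\ because $D$ is simultaneously nef and exceptional over some model). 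This bypasses any abstract ``numerically trivial implies torsion'' statement. Your appeal to Hacon--Patakfalvi structure theory in the maximal Albanese dimension case is also circular for this paper: the relevant birational characterization of abelian $3$-folds is proved here (Theorem \ref{ch-of-ab-kx}) and is partly a consequence of, not an input to, the abundance argument.
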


\medskip

\textbf{Strategy of the proof:}
The main tools of the proof include subadditivity of Kodaira dimensions of log divisors in dimension three and explicit geometry of varieties with Kodaira dimension zero. The approach is explained below.

By the method of \cite{Wal15}, we can show abundance when $\kappa(K_X + B) \geq 1$ (Theorem \ref{s-amp-k=2}). So we only need to show that either $K_X+B\sim_{\Q} 0$ or $\kappa(X, K_X+B) \geq 1$.

If the Albanese map $a_X: X \to A_X$ is separable, then the Stein factorization of $a_X$ induces a separable fibration $f: X \to Y$. By a sequence of Frobenius base changes and a smooth resolution, we get a fibration $f': X' \to Y'$ with smooth geometric generic fiber $X'_{\ol\eta}$. We can show $\kappa(X) \geq \kappa(X') \geq 0$ by subadditivity of Kodaira dimensions (Theorem \ref{Iit-conj} and Corollary \ref{Iit-cor}). Since $B$ is effective, we only need to work on varieties with $\kappa(X) = 0$ (thus $\kappa(X') = \kappa(Y) = \kappa(X'_{\ol\eta}) = 0$). By Theorem \ref{compds} and working on a minimal model of $X'$, we reduce to computing $\kappa(W, D)$ where $W$ is a minimal model of $X'$ with $K_W \sim_{\Q} 0$ and $D$ is an effective and nef divisor on $W$. We aim to show that either $\kappa(W, D) >0$ or $D=0$, which implies our theorem.
In case $\dim Y = 1$, we will use subadditivity of Kodaira dimensions of log divisors (Corollary \ref{div-k=0}).
In case $\dim Y = 2$, the condition $\kappa(X')=0$ implies that $Y$ is an abelian surface and $\mathrm{var}(f') = 0$ by Theorem \ref{Iit-conj} (iv), we can do a base change $Z \to A$ such that, the variety $X \times_A Z$ is birational to $Z\times C$, then we treat the case $\kappa(X_{\ol\eta}, (K_X+B)_{\ol\eta}) = 0$ by using canonical bundle formula, and treat the case $\kappa(X_{\ol\eta}, (K_X+B)_{\ol\eta}) = 1$ by pulling back $D$ on $Z\times C$. In case $\dim Y = 3$, if $A_X$ is simple, then we can prove that $X$ is birational to an abelian variety (Theorem \ref{ch-of-ab-kx}); if $A_X$ is not simple, then $X$ has a natural fibration to an elliptic curve or an abelian surface, we reduce to one of the previous two cases.

If the Albanese map $a_X: X \to A_X$ is inseparable, then we have a foliation $\mathcal{F} = \mathcal{L}^{\bot} \subset T_X$ where $\mathcal{L}$ is the saturation of the image of the natural homomorphism $a_X^*\Omega_{A_X}^1 \rightarrow \Omega_{X}^1$. By replacing $X$ with $X/\mathcal{F}$ repeatedly, we can finally obtain a variety whose Albanese map is separable, then show that $\kappa(X) \geq 1$ by induction (Theorem \ref{reduction}).

\medskip

As an application, combining Theorem \ref{abundance} and Theorem \ref{ch-of-ab-kx} gives the following result.
\begin{cor}
Let $X$ be a smooth projective 3-fold of maximal Albanese dimension, over an algebraically closed field of characteristic $p > 5$. If $\kappa(X) =0$, then $X$ is birationally equivalent to an abelian 3-fold.
\end{cor}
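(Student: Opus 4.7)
The plan is to combine Theorem \ref{abundance} with Theorem \ref{ch-of-ab-kx} through a minimal model. I would first unpack the hypotheses: since $X$ has maximal Albanese dimension, the Albanese map $a_X: X \to A_X$ is generically finite onto its image, so in particular non-trivial, with $\dim A_X \geq 3$. Because $\kappa(X) = 0$, the variety $X$ is non-uniruled (uniruled varieties have $\kappa = -\infty$). Appealing to the MMP for non-uniruled $\Q$-factorial terminal 3-folds of Birkar–Hacon–Xu in characteristic $p > 5$, I run a $K_X$-MMP to reach a $\Q$-factorial terminal minimal model $\pi: X \dashrightarrow X'$ with $K_{X'}$ nef.

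Next I would transfer the hypotheses to $X'$. Each step of the MMP is an isomorphism in codimension one, so the Albanese variety and map are preserved: $A_{X'} \cong A_X$ and $a_{X'}: X' \to A_{X'}$ is again generically finite, hence non-trivial, with $X'$ of maximal Albanese dimension. Theorem \ref{abundance} applied to the minimal klt pair $(X', 0)$ then yields that $K_{X'}$ is semi-ample. Since the Kodaira dimension is a birational invariant of terminal models, $\kappa(X', K_{X'}) = \kappa(X) = 0$, and a semi-ample divisor with Kodaira dimension zero is $\Q$-linearly trivial; thus $K_{X'} \sim_{\Q} 0$.

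At this point I invoke Theorem \ref{ch-of-ab-kx}: $X'$ is a terminal 3-fold of maximal Albanese dimension with $K_{X'} \sim_{\Q} 0$, so it is birational to an abelian 3-fold. Since $X \dashrightarrow X'$ is birational, $X$ itself is birationally equivalent to an abelian 3-fold, as claimed.

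The main point requiring care is the application of Theorem \ref{ch-of-ab-kx} on $X'$: if that theorem is stated only for smooth varieties, I would pass to a smooth resolution $\tilde X \to X'$, which is smooth, birational to $X$, has $\kappa(\tilde X) = 0$, and inherits maximal Albanese dimension from $X'$, and then apply Theorem \ref{ch-of-ab-kx} on $\tilde X$. Everything else is formal, so this compatibility step — together with the preservation of the maximal Albanese property along the MMP — is the only delicate ingredient.
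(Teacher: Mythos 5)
Your proposal is correct and is exactly the paper's intended argument: the corollary is stated as following by combining Theorem \ref{abundance} with Theorem \ref{ch-of-ab-kx}(a), which is precisely your route through a $\Q$-factorial terminal minimal model $X'$ with $K_{X'}\sim_{\Q}0$. One small correction: divisorial contractions are not isomorphisms in codimension one, but the Albanese map still descends through every MMP step because the contracted and flipped curves are rational and $A_X$ contains no rational curves, which is all that hypothesis (a) of Theorem \ref{ch-of-ab-kx} requires.
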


\medskip

This paper is organized as follows. In Sec. \ref{pre}, we collect results on minimal models and subadditivity of Kodaira dimensions. In Sec. \ref{sec-s-amp-k=2}, we show abundance under the assumption that $\kappa(X, K_X + B) \geq 1$. In Sec. \ref{sec-ch-of-ab}, we treat the case that the Albanese map is inseparable and give some criteria for a variety being birational to an abelian variety. In Sec. \ref{sec-pf-abd}, we prove Theorem \ref{abundance}.

Part of the results in this paper are also proved by Das and Waldron independently around the same time by different method in \cite{DW16}.

\medskip

\textbf{Notation and Conventions:}
Let $X$ be a projective variety over a field $K$ and $D$ a $\mathbb{Q}$-Cartier divisor on $X$. The \emph{$D$-dimension} $\kappa(X,D)$ is defined as
\[\kappa(X,D) =\left\{
\begin{array}{llr}
-\infty, \text{ ~~if for every integer } m >0, |mD| = \emptyset;\\
\max \{\dim_K \Phi_{|mD|}(X)| m \in \mathbb{Z}~\text{and}~m>0 \}, \text{ otherwise.}
\end{array}\right.
\]
If $X$ has a regular projective birational model $\tilde{X}$, the Kodaira dimension $\kappa(X)$ of $X$ is defined as $\kappa(\tilde{X}, K_{\tilde{X}})$
where $K_{\tilde{X}}$ denotes the canonical divisor.

Throughout this paper, we work over an algebraically closed field $k$ with $\mathrm{char}~k = p > 0$.  A {\it variety}
means an integral separated scheme of finite type over $k$.

For the notions in minimal model theory such as lc, klt pairs, flip and divisorial contraction and so on, please refer to \cite{Bir16}.
By \cite{CP08} and \cite{CP09}, we can always take a log smooth resolution of a pair $(X, \Delta)$ in dimension three.

A {\it fibration} means a projective morphism $f:X\to Y$
between varieties such that the natural morphism $\O_Y \to f_*\O_X$ is an isomorphism. An \emph{elliptic fibration} means a fibration whose geometric generic fiber is a smooth elliptic curve.
In this paper, since the notation $f:X\to Y$ appears frequently (often as a fibration), we will use $\eta$ (resp. $\ol\eta$) specially to denote the (geometric) generic point of $Y$, and use $X_\eta$ (resp. $X_{\ol\eta}$) to denote the (geometric) generic fiber of $f$.

For a variety $X$, we use $F_X^e: X^{e} \to X$ or $F_X^e: X \to X^{(e)}$ for the $e$-th absolute Frobenius iteration.

For a normal projective variety $X$, $\mathrm{Pic}^0(X)_{\mathrm{red}}$ is an abelian variety (see e.g.\cite[Sec. 9.5]{FGA}). Let $A_X$ denote the abelian variety dual to $\mathrm{Pic}^0(X)_{\mathrm{red}}$. If $\dim \mathrm{Pic}^0(X)>0$, then there is a natural nontrivial morphism $a_X: X \rightarrow A_X$, namely, the Albanese morphism (\cite[Sec. 5]{Ba01}).

Let $\varphi:X \to T$ be a morphism of schemes and let $T'$ be a $T$-scheme.
Then we denote by $X_{T'}$ the fiber product
$X\times_{T}T'$. For a Cartier
or $\Q$-Cartier divisor $D$ on $X$ (resp. an $\O_X$-module $\G$), the pullback of $D$ (resp. $\G$) to $X_{T'}$
is denoted by $D_{T'}$ or $D|_{X_{T'}}$ (resp. $\G_{T'}$ or $\G|_{X_{T'}}$) if it is well-defined.

We use $\sim$ (resp. $\sim_{\mathbb{Q}}$) for linear (resp. $\mathbb{Q}$-linear) equivalence between Cartier (resp. $\mathbb{Q}$-Cartier) divisors and line bundles. On a normal variety $X$, for two $\Q$-divisors $D_1, D_2$, if the restrictions on the smooth locus of $X$, $D_1|_{X^{sm}} \sim_{\Q} D_2|_{X^{sm}}$, we also denote $D_1 \sim_{\Q} D_2$.

\begin{acknowledgement}
The idea was formulated during the period of the author visiting BICMR, and the paper was prepared when visiting KIAS. The author thanks the two institutes for their hospitalities and supports. He expresses his appreciation to Prof. Chenyang Xu for his encouragement and to Dr. Sho Ejiri, Yi Gu and Joe Waldron for many useful discussions. He also thanks Hiromu Tanaka for his useful comments. After this paper was finished, Joe Waldron and Omprokash Das informed me that they got similar results. The author thanks them for sharing their preprint. Finally the author is grateful to an anonymous referee for pointing me out a mistake and other valuable suggestions. The author was supported by the NSFC (No. 11771260 and 11401358).
\end{acknowledgement}

\section{Preliminaries}\label{pre}

\subsection{Separability}
Recall some results of separability.
\begin{prop}\label{prop-of-sep}
Let $f: X \to Y$ be a surjective morphism between normal quasi-projective varieties.

(1) If $f$ is a fibration, then $f$ is separable if and only if the geometric generic fiber $X_{\bar{\eta}}$ is integral, and if and only if $X_{\bar{\eta}}$ is reduced.

(2) If $f$ is a fibration to a curve, then $f$ is separable.

(3) The morphism $f$ is separable if and only if $\mathrm{rank}~\Omega_{X/Y} = \dim X -\dim Y$.

(4) Denote by $F$ the generic fiber of $f$. If $F \otimes_{K(Y)}K(Y)^{\frac{1}{p^{\infty}}}$ is regular then $F$ is smooth over $K(Y)$.
\end{prop}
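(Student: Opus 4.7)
The proposition packages four essentially field-theoretic facts about separability, which I address in the order (3), (1), (2), (4), since (3) is used in proving (2). Part (3) follows from the classical differential criterion for separable field extensions: for a finitely generated extension $L/K$, one has $\dim_L \Omega_{L/K} \geq \mathrm{tr.deg}_K L$ with equality exactly when $L/K$ is separable. Applying this to $L = K(X)$, $K = K(Y)$ and noting that $\Omega_{X/Y}$ has generic stalk $\Omega_{K(X)/K(Y)}$ immediately yields (3). For (1), the fibration condition $f_*\O_X = \O_Y$ forces $K(Y)$ to be algebraically closed in $K(X)$: any $\alpha \in K(X)$ integral over $K(Y)$ would make $\O_Y[\alpha]$ a finite $\O_Y$-subalgebra of $f_*\O_X = \O_Y$, hence $\alpha \in K(Y)$. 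Thus $K(X)/K(Y)$ is a regular extension, and $K(X) \otimes_{K(Y)} \overline{K(Y)}$ is an integral domain whenever it is reduced. Combined with the standard equivalences ``$X_{\ol\eta}$ reduced $\Leftrightarrow$ $K(X)/K(Y)$ separable'' and ``$X_{\ol\eta}$ integral $\Leftrightarrow$ $K(X)/K(Y)$ regular'', this collapses the three conditions of (1) into one.

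The heart of the matter is (2). Assume $f$ is inseparable. From the fundamental exact sequence
$$ f^*\Omega_Y \to \Omega_X \to \Omega_{X/Y} \to 0, $$
together with (3) and the fact that $X$ is generically smooth (being normal over the algebraically closed field $k$), a rank count forces $f^*\Omega_Y \to \Omega_X$ to vanish at the generic point of $X$: otherwise the map would be generically injective and $\Omega_{X/Y}$ would have the ``right'' rank $\dim X - 1$, contradicting inseparability. Hence $d(f^*t) = 0$ in $\Omega_{K(X)/k}$ for every $t \in K(Y)$. Since $K(X)/k$ is separably generated and $k$ is perfect, the kernel of $d \colon K(X) \to \Omega_{K(X)/k}$ equals $K(X)^p$; therefore $f^*(K(Y)) \subseteq K(X)^p$, i.e., $K(Y)^{1/p} \subseteq K(X)$. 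Let $Y'$ denote the normalization of $Y$ in $K(Y)^{1/p}$; then $Y' \to Y$ is finite purely inseparable of degree $p$. Normality of $X$ promotes the induced rational map to an honest morphism $f' \colon X \to Y'$, so $f$ factors as $(Y' \to Y) \circ f'$. Pushing forward structure sheaves, $f_*\O_X = (Y' \to Y)_*(f'_*\O_X)$ has generic $\O_Y$-rank at least $p$, contradicting $f_*\O_X = \O_Y$. The delicate step, which I expect to be the main obstacle, is the rank-jump argument under the degree-$p$ pushforward.

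Finally, (4) is a short observation: $K(Y)^{1/p^\infty}$ is a perfect field, so regularity of $F \otimes_{K(Y)} K(Y)^{1/p^\infty}$ is equivalent to its smoothness over $K(Y)^{1/p^\infty}$. Since $\overline{K(Y)}/K(Y)^{1/p^\infty}$ is a separable algebraic extension and regularity of schemes is preserved under separable base extensions of the ground field, $F \otimes_{K(Y)} \overline{K(Y)}$ is regular; hence $F$ is geometrically regular, i.e., smooth, over $K(Y)$.
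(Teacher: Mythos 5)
Your proposal is correct and in substance follows the same route as the paper, whose ``proof'' is essentially a list of citations (Liu for (1) and (4), B\u{a}descu for (2), Hartshorne II.8.6A for (3)); the arguments you supply --- the differential criterion for (3), normality forcing $K(Y)$ to be algebraically closed in $K(X)$ for (1), the factorization of an inseparable fibration through the normalization of $Y$ in $K(Y)^{1/p}$ for (2), and descent of smoothness from the perfection for (4) --- are exactly the standard ones underlying those references, and the step you flag as delicate in (2) is in fact immediate at the generic point, where it is just the strict field containment $K(Y)\subsetneq K(Y)^{1/p}\subseteq (f_*\O_X)_\eta=K(Y)$. One terminological slip in (1): you assert that $K(X)/K(Y)$ is a \emph{regular} extension, which by definition would mean $K(X)\otimes_{K(Y)}\overline{K(Y)}$ is always a domain and would render all three conditions of (1) automatic; what algebraic closedness of $K(Y)$ in $K(X)$ actually yields is that this tensor product has irreducible spectrum, hence is a domain \emph{whenever reduced} --- which is the clause you actually use, so the argument stands once the word ``regular'' is deleted.
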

\begin{proof}
For (1), since $f$ is a fibration we have $H^0(X_\eta, \O_{X_\eta}) \cong K(Y)$, and since $X$ is normal, $K(Y)$ is integrally closed in $K(X)$. Therefore $X_{\bar{\eta}}$ is irreducible by \cite[Chap.3 Cor. 2.14 (d)]{Liu10}. Then assertion (1) follows from \cite[Chap.3 Prop. 2.15]{Liu10}

For (2) refer to \cite[Lemma 7.2]{Ba01}, for (3) refer to \cite[Chap. II Proposition 8.6A]{Har77}, and for (4) refer to \cite[Chap. 4 Corollary 3.33]{Liu10} since $K(Y)^{\frac{1}{p^{\infty}}}$ is perfect.
\end{proof}
\begin{rem}\label{bc-smthing}
Assume $\dim X = 3$ and $f$ is separable. For an integer $e >0$, consider the $e$-th absolute Frobenius base change $F_Y^e: Y'= Y^e \to Y$, and take a smooth resolution $\sigma: X' \to \bar{X}'= X\times_Y Y'$. Applying Proposition \ref{prop-of-sep} (4), we see that if $e$ is sufficiently large then the geometric generic fiber of $f': X'\to Y'$ is smooth (see \cite[Proof of Corollary 1.3]{BCZ15}).
\end{rem}

\subsection{Covering Theorem}
The result below is \cite[Theorem 10.5]{Iit82} when $X$ and $Y$ are both smooth, and the proof therein also applies when the varieties are normal.
\begin{thm}\textup{(\cite[Theorem 10.5]{Iit82})}\label{ct}
Let $f\colon X \rightarrow Y$ be a proper surjective morphism between complete normal varieties.
If $D$ is a Cartier divisor on $Y$ and $E$ an effective $f$-exceptional divisor on $X$, then
$$\kappa(X, f^*D + E) = \kappa(Y, D).$$
\end{thm}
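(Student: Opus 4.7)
My plan is to prove the two inequalities separately; the upper bound contains essentially all the content.

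For the lower bound $\kappa(X, f^*D + E) \geq \kappa(Y, D)$ I would pull back sections: since $E$ is effective, the tautological inclusion $\mathcal{O}_X \hookrightarrow \mathcal{O}_X(mE)$ combined with pullback along $f$ produces an injection $H^0(Y, mD) \hookrightarrow H^0(X, m(f^*D + E))$ whose image generates a linear subsystem whose associated rational map is $\Phi_{|mD|} \circ f$; letting $m$ vary gives the inequality.

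The heart of the argument is the reverse inequality. I would factor $f$ through its Stein factorization $X \xrightarrow{g} Y' \xrightarrow{h} Y$, where $g$ is a fibration (so $g_*\mathcal{O}_X = \mathcal{O}_{Y'}$) and $h$ is finite. Normality of $X$ forces $Y'$ to be normal, and $E$ remains $g$-exceptional since $h$ is finite. The crux is then the pushforward identity
\[ g_*\mathcal{O}_X(mE) \;=\; \mathcal{O}_{Y'} \qquad \text{for every } m \geq 0. \]
Granting this, the projection formula yields $g_*\mathcal{O}_X(m(g^*h^*D + E)) = \mathcal{O}_{Y'}(m h^*D)$, so taking global sections gives $H^0(X, m(f^*D + E)) = H^0(Y', m h^*D)$. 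Since $h$ is finite surjective between normal projective varieties, the classical equality $\kappa(Y', h^*D) = \kappa(Y, D)$ (the graded pieces are finitely generated $R(Y,D)$-modules of generic rank $\deg h$) then completes the upper bound.

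To establish the pushforward identity I would take a local section $s \in \Gamma(U, g_*\mathcal{O}_X(mE))$ over an open $U \subseteq Y'$, viewed as a rational function regular on $g^{-1}(U) \setminus \mathrm{Supp}(E)$. Because $E$ is $g$-exceptional, $Z := g(\mathrm{Supp}(E))$ has codimension $\geq 2$ in $Y'$, so $V := U \setminus Z$ has complement of codimension $\geq 2$ in $U$. On $g^{-1}(V)$ the function $s$ is regular, and since $g_*\mathcal{O}_X = \mathcal{O}_{Y'}$ it descends to a regular function on $V$; normality of $Y'$ and the codimension bound extend it uniquely to a regular function on $U$. The reverse inclusion $\mathcal{O}_{Y'} \subseteq g_*\mathcal{O}_X(mE)$ is automatic from $E \geq 0$. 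This pushforward computation is the main obstacle, and it is precisely where the normality hypothesis on $Y$ (via $Y'$) substitutes for smoothness in Iitaka's original proof; once the identity is in hand the rest is formal manipulation with projection formula and linear systems.
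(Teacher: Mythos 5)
Your proof is correct, and it follows essentially the route the paper intends: the paper gives no argument of its own but cites Iitaka's Theorem 10.5 and asserts that ``the proof therein also applies when the varieties are normal,'' and your writeup --- Stein factorization, the identity $g_*\mathcal{O}_X(mE)=\mathcal{O}_{Y'}$ proved by descending a section off the codimension-$\geq 2$ image of $E$ and extending by normality, then the projection formula and invariance of $\kappa$ under the finite part --- is precisely that adaptation, with normality of $X$ and $Y'$ doing the work that smoothness does in Iitaka's original setting. The only step you leave as a citation, $\kappa(Y',h^*D)=\kappa(Y,D)$ for the finite surjective $h$, is indeed classical (it follows from embedding the torsion-free sheaf $h_*\mathcal{O}_{Y'}$ into $\mathcal{O}_Y(A)^{\oplus \deg h}$ and the standard growth estimate for $h^0(mD+A)$), so no gap remains.
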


\subsection{The behavior of relative canonical divisors under base changes}
\begin{prop}\label{compds}
Let $f: X \rightarrow Y$ be a separable fibration between two normal varieties. Let $\Delta$ be an effective $\Q$-Weil divisor on $X$ such that $K_{X/Y} + \Delta$ is $\Q$-Cartier.
Let $\pi: Y' \rightarrow Y$ be a smooth modification, $\bar{X}'$ the main component of $X\times_Y Y'$ and $\sigma: X' \rightarrow \bar{X}'$ a birational projective morphism with $X'$ normal, which fit into the following commutative diagram
$$
\xymatrix@C=2cm{
&X' \ar@/^2pc/[rr]|{\sigma'}\ar[r]^>>>>>>>>>{\sigma} \ar[rd]^{f'} &\bar{X}' \subset X \times_Y Y'
\ar[r]^<<<<<<<<<{\pi'}\ar[d]^{\bar{f}'}    &X\ar[d]^f \\
& &Y'\ar[r]^{\pi}   &Y
}
$$
where $\pi'$ and $\bar{f}'$ denote the natural projections, $f'=\bar{f}'\circ \sigma$ and $\sigma' = \pi'\circ \sigma$.

Assume either that $f$ is flat or that $Y$ is smooth.
Then there exist an effective $\sigma'$-exceptional $\Q$-divisor $E'$ and an effective $\Q$-divisor $\Delta'$ on $X'$ such that
$$K_{X'/Y'} + \Delta' \sim_{\Q} \sigma'^*(K_{X/Y} + \Delta) + E'.$$
\end{prop}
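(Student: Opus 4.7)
The plan is to produce $\Delta'$ and $E'$ by extracting a $\sigma'$-exceptional correction term from the log pullback of $K_X + \Delta$ under $\sigma'$ and then splitting it into its positive and negative parts. Let $\widetilde\Delta$ denote the strict transform of $\Delta$ on $X'$.

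The smooth case contains the essential idea. Since $Y'$ is smooth and $\pi$ is a birational morphism between smooth varieties, $K_{Y'}=\pi^*K_Y+E_\pi$ with $E_\pi$ effective and $\pi$-exceptional. Rewriting the relative canonical divisors as $K_{X'/Y'}=K_{X'}-\sigma'^*f^*K_Y-f'^*E_\pi$ and $K_{X/Y}+\Delta=(K_X+\Delta)-f^*K_Y$, the hypothesis that the left-hand side of the latter is $\Q$-Cartier forces $K_X+\Delta$ to be $\Q$-Cartier as well. The standard log pullback formula for a $\Q$-Cartier divisor under the birational morphism $\sigma'$ of normal varieties then gives
\[ K_{X'}+\widetilde\Delta \sim_{\Q} \sigma'^*(K_X+\Delta) + E_0 \]
for a $\sigma'$-exceptional $\Q$-divisor $E_0$, which I decompose as $E_0 = E_{0,+}-E_{0,-}$ with disjoint effective supports. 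Setting
\[ \Delta' := \widetilde\Delta + E_{0,-} + f'^*E_\pi, \qquad E' := E_{0,+}, \]
one checks by direct substitution (using $f'^*K_{Y'}=\sigma'^*f^*K_Y+f'^*E_\pi$) that $\Delta'$ is effective, $E'$ is effective and $\sigma'$-exceptional, and the desired identity $K_{X'/Y'}+\Delta'\sim_{\Q}\sigma'^*(K_{X/Y}+\Delta)+E'$ holds.

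In the flat case $K_Y$ need not be $\Q$-Cartier, and the replacement for the identity $K_{Y'}=\pi^*K_Y+E_\pi$ is flat base change of the relative dualizing sheaf, $\omega_{\bar X'/Y'}\cong\pi'^*\omega_{X/Y}$, giving $K_{\bar X'/Y'}\sim\pi'^*K_{X/Y}$ on $\bar X'$. Pulling back through $\sigma$ and repeating the positive/negative decomposition with the log pullback of the $\Q$-Cartier divisor $K_{X/Y}+\Delta$ then yields the required $\Delta'$ and $E'$. I expect the main obstacle to lie here: $\bar X'$ is not normal in general, so one must pass through its normalization and verify that the conductor-type corrections introduced there are effective and can be absorbed into $\Delta'$. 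This becomes routine once one uses that $\bar f'$ is flat with reduced generic fiber (since $f$ is separable and flat), so $\bar X'$ is reduced and the flat base change formula for $\omega_{X/Y}$ is applicable.
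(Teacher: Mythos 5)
Your smooth-case argument is clean, but it proves a strictly weaker statement than the one the paper actually uses, because it reads ``modification'' as forcing $\pi$ to be birational. Every identity you rely on --- $K_{Y'}=\pi^*K_Y+E_\pi$ with $E_\pi$ effective and $\pi$-exceptional, and the log-pullback formula $K_{X'}+\widetilde\Delta\sim_{\Q}\sigma'^*(K_X+\Delta)+E_0$ with $E_0$ $\sigma'$-exceptional --- is specific to birational morphisms. However, the proposition is invoked in this paper with $\pi$ equal to a Frobenius iteration $F_Y^e:Y^e\to Y$ (Remark \ref{bc-smthing}, Corollary \ref{Iit-cor}, Section \ref{albdim=1}) and with $\pi:Z\to A$ a generically finite cover (Section \ref{albdim=2}). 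In those situations $\sigma'$ is generically finite, ramified or purely inseparable; the error term is then a ramification-type divisor that is \emph{not} exceptional, and your cancellation against $f'^*E_\pi$ has no analogue. This is exactly why the paper's proof does not argue this way: it quotes \cite[Proposition 2.1]{Zha16} for the flat case and adapts \cite[Theorem 2.4]{CZ15} for the smooth case, and the content of both is a duality-theoretic comparison of $\omega_{X'/Y'}$ with $\sigma'^*\omega_{X/Y}$ (flat base change of relative dualizing sheaves plus the Grothendieck trace map of $\sigma$), with no birationality hypothesis on $\pi$.

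Your flat-case paragraph points in the right direction --- the base-change isomorphism $\omega_{\bar X'/Y'}\cong\pi'^*\omega_{X/Y}$ is indeed the engine of \cite[Proposition 2.1]{Zha16}, and it does not need $\pi$ birational --- but it remains a sketch: the step you yourself flag as ``the main obstacle'' (passing through the normalization $\nu:\widetilde X\to\bar X'$, checking that the conductor enters with a sign that lets it be absorbed into $\Delta'$, and then comparing $K_{X'}$ with the pullback from $\widetilde X$ when only $K_{X/Y}+\Delta$, not $K_{X/Y}$ itself, is $\Q$-Cartier) is precisely where the work lies, and it is not carried out. The sign does work out ($K_{\widetilde X}+C=\nu^*K_{\bar X'}$ with the conductor $C\geq 0$ going into $\Delta'$), but as written the proposal only establishes the proposition for birational $\pi$ with $Y$ smooth, which does not cover the applications in the paper.
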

\begin{proof}
If $f$ is flat, then the assertion is \cite[Proposition 2.1]{Zha16}. If $Y$ is smooth, by working on the smooth locus of $X$ and $X'$, we can prove the assertion by similar arguments of \cite[Theorem 2.4]{CZ15}
\footnote{The proof of \cite[Theorem 2.4]{CZ15} contains a mistake: in that long equation to explain the homomorphism $\beta$, the $4^{\mathrm{th}}$ ``$\cong$'' holds when $L\pi_2^*i_*\O_X$ is perfect, hence the proof is correct if $Z$ (hence $P$) is smooth, otherwise it is wrong in general. This mistake does not affect the main results of \cite{CZ15}.}.
\end{proof}

\subsection{Minimal model theory of 3-folds}
We collect some results on minimal model theory of 3-folds in the following theorem, which will be used in the sequel.
\begin{thm}\label{rel-mmp}
Assume $\mathrm{char}~k =p >5$. Let $(X,B)$ be a $\mathbb{Q}$-factorial projective pair of dimension three and $f: X\rightarrow  Y$ a projective surjective morphism.

(1) If either $(X, B)$ is klt and $K_X+B$ is pseudo-effective over $Y$, or $(X,\Delta)$ is lc and $K_X+\Delta$ has a weak Zariski decomposition over $Y$, then $(X,B)$ has a log minimal model over $Y$.

(2) If $(X, B)$ is klt and $K_X+B$ is not pseudo-effective over $Y$, then $(X,B)$ has a Mori fibre space over $Y$.

(3) Assume that $(X, B)$ is klt and $K_X+B$ is nef over $Y$.
\begin{itemize}
\item[(3.1)]
If $K_X+B$ or $B$ is big over $Y$, then $K_X+B$ is semi-ample over $Y$.
\item[(3.2)]
If $\dim Y \geq 1$, $X_{\eta}$ is integral and $\kappa(X_{\eta}, (K_X+B)_{\eta}) \geq 0$, then $(K_X+B)_{\eta}$ is semi-ample on $X_{\eta}$.
\item[(3.3)]
If $Y$ is a smooth curve, $X_{\eta}$ is integral and $\kappa(X_{\eta}, (K_X+B)_{\eta}) = 0$ or $2$, then $K_X+B$ is semi-ample over $Y$.
\item[(3.4)]
If $Y$ contains no rational curves, then $K_X+B$ is nef.
\end{itemize}

(4) If $Y$ is a non-uniruled surface and $K_X+B$ is pseudo-effective over $Y$, then $K_X + B$ is pseudo-effective, and there exists a map $\sigma: X \dashrightarrow \bar{X}$ to a minimal model $\bar{X}$ of $X$ such that, the restriction $\sigma|_{X_{\eta}}$ is an isomorphism to its image.
\end{thm}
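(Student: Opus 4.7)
The theorem collects well-established results in the minimal model theory of 3-folds in characteristic $p > 5$, so my plan is to deduce each item by invoking the work of Birkar, Hacon--Xu, Cascini--Tanaka--Xu, and Xu, supplemented by short reductions for the fiberwise statements. For (1) and (2), in the klt case a relative $(K_X+B)$-MMP with scaling, run over $Y$, produces either a log minimal model (when $K_X+B$ is $f$-pseudoeffective) or a Mori fibre space, by the main theorems of \cite{HX15, Bir16}; the lc case with a weak Zariski decomposition is handled by the lc MMP in \cite{Bir16}. For (3.1), when $K_X+B$ is big over $Y$ this is the relative basepoint-free theorem in \cite{Bir16}, and when $B$ is big over $Y$ it follows from the main result of \cite{CTX15}.

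For (3.2), one restricts to the generic fiber: $(X_\eta, B_\eta)$ is a klt pair over $K(Y)$ of dimension at most two with $(K_X+B)_\eta$ nef and with nonnegative Iitaka dimension, and abundance in dimension $\leq 2$ in positive characteristic (trivial for curves, established for surfaces by Tanaka and others) gives semi-ampleness of $(K_X+B)_\eta$. For (3.3), with $Y$ a smooth curve, the case $\kappa=2$ means $K_X+B$ is big over $Y$ and reduces to (3.1), while the case $\kappa=0$ yields $(K_X+B)_\eta \sim_\Q 0$ by (3.2); using flatness of $f$ over the smooth curve $Y$ together with semicontinuity, the sheaf $f_*\O_X(m(K_X+B))$ for suitable $m$ is locally free of rank one, and its evaluation produces $D$ on $Y$ with $m(K_X+B) \sim_\Q f^*D$, from which relative semi-ampleness is immediate. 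For (3.4), apply the cone theorem for klt 3-folds in characteristic $p > 5$ from \cite{Bir16}: if $K_X+B$ were not nef, there would be a rational curve $C$ spanning a $(K_X+B)$-negative extremal ray; since $K_X+B$ is $f$-nef, $C$ cannot be $f$-vertical, so $f(C)$ is a rational curve in $Y$, contradicting the hypothesis.

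For (4), non-uniruledness of the surface $Y$ implies $K_Y$ is pseudoeffective, and combined with the $f$-pseudoeffectiveness of $K_X+B$ this gives that $K_X+B$ is pseudoeffective on $X$; then (1) provides a log minimal model $\sigma \colon X \dashrightarrow \bar{X}$, reached from $X$ by a finite sequence of $(K_X+B)$-MMP steps. The main obstacle, which I expect to be the most delicate step, is checking that $\sigma|_{X_\eta}$ is an isomorphism onto its image. For this, note that each flipping or divisorial contraction locus consists of rational curves by the cone theorem; such a curve $C \subset X$ either has $f(C)$ a point (so $C$ lies in a closed fiber and is disjoint from $X_\eta$) or $f(C)$ is a rational curve in the non-uniruled surface $Y$, which in particular does not pass through the generic point $\eta$, so $C$ is again disjoint from $X_\eta$. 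Consequently each MMP step is an isomorphism on a neighborhood of $X_\eta$, and the composition $\sigma|_{X_\eta}$ is an isomorphism onto its image.
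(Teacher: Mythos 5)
Most of your proposal coincides with the paper's treatment, which for (1), (2), (3.1), (3.2) and (3.4) is essentially a list of citations to \cite{Bir16}, \cite{BW14}, \cite{Xu15}, \cite{CTX15}, \cite{Ta15} and \cite{BCZ15} (your (3.4) is verbatim the paper's argument). However, in the places where you replace a citation by your own reasoning there are genuine gaps. In (4), your opening claim that pseudo-effectivity of $K_X+B$ over $Y$ together with pseudo-effectivity of $K_Y$ forces $K_X+B$ to be pseudo-effective is an unjustified weak-positivity statement: it amounts to descending relative positivity of $K_{X/Y}+B$ to the total space, which is precisely the kind of subadditivity result that is only known in special cases in characteristic $p$. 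The paper argues by contradiction instead: if $K_X+B$ were not pseudo-effective, (2) yields a Mori fibre space, so $X$ is covered by rational curves; vertical ones contradict $f$-pseudo-effectivity, horizontal ones make $Y$ uniruled. Still in (4), your curve-by-curve argument shows only that each contracted curve misses $X_\eta$ (its image is a proper closed subset of $Y$), which does not imply that the exceptional locus misses $X_\eta$: for a divisorial contraction the exceptional divisor $E$ is two-dimensional and could dominate $Y$ while being swept out by curves each mapping onto a curve in $Y$, in which case $E\cap X_\eta\neq\emptyset$. One needs the stronger input that $E$ is uniruled (the paper cites the proof of \cite[Lemma 3.2]{BW14}), so that $E$ dominating $Y$ would produce a dominating family of rational curves on $Y$, contradicting non-uniruledness; for flips the correct observation is simply that a one-dimensional locus cannot dominate the surface $Y$.

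The $\kappa=0$ case of (3.3) also has a gap. The evaluation map $f^*f_*\mathcal{O}_X(m(K_X+B))\to\mathcal{O}_X(m(K_X+B))$ only gives $m(K_X+B)\sim f^*D+E$ with $E$ effective and vertical; it does not by itself produce $m(K_X+B)\sim_{\Q} f^*D$. Showing that the vertical residual divisor is ($\Q$-linearly) pulled back from $Y$ is the substantive content of the statement, and the paper does not reprove it --- it is quoted from \cite[Theorems 1.5 and 1.6]{BCZ15}. If you want to give an argument rather than a citation here, you must supply that step (nefness of $K_X+B$ over $Y$ plus a Zariski-lemma type analysis of the vertical divisor), not just local freeness of the pushforward.
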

\begin{proof}
For (1) please refer to \cite[Theorem 1.2 and Proposition 7.3]{Bir16}.

For (2), refer to \cite{BW14}.

For (3.1), please refer to \cite{Bir16}, \cite{Xu15} and \cite{BW14}.

For (3.2) and (3.3) please refer to \cite[Theorem 1.5 and 1.6 and the remark below 1.6]{BCZ15}. And (3.2) also can be obtained from \cite{Ta15}.

Assertion (3.4) follows from the cone theorem \cite[Theorem 1.1]{BW14}. Indeed, otherwise we can find an extremal ray $R$ generated by a rational curve $\Gamma$,  so $\Gamma$ is contained in a fiber of $f$ since $Y$ contains no rational curves, this contradicts that $K_X + B$ is $f$-nef.

For (4), $K_X + B$ is obviously pseudo-effective because otherwise, $X$ will be ruled by horizontal (w.r.t. $f$) rational curves by (2), which contradicts that $Y$ is non-uniruled. The exceptional locus of a flip contraction is of dimension one, so it does not intersect $X_{\eta}$, neither does that of an extremal divisorial contraction because it is uniruled (see the proof of \cite[Lemma 3.2]{BW14}). Running an LMMP for $K_X +B$, by induction we get a needed map $\sigma: X \dashrightarrow \bar{X}$.
\end{proof}

\subsection{Subadditivity of Kodaira dimensions}

Subadditivity of Kodaira dimensions of log divisors on 3-folds plays a key role in our proof. We collect some results which will be used in the sequel. For more general results on this topic please refer to \cite{Pa14}, \cite{Pa13}, \cite{Ej15}, \cite{Zha16} and \cite{Zha16b}.

\begin{thm}\label{Iit-conj}
Assume $\mathrm{char}~k =p >5$. Let $f: X \to Y$ be a separable fibration from a $\Q$-factorial projective 3-fold to a smooth projective variety of dimension $1$ or $2$. Let $B$ be an effective $\Q$-divisor on $X$ such that $(X,B)$ is klt.
If one of the following holds

(i) $\dim (Y) = 1$, $(X_{\ol\eta}, B_{\ol\eta})$ is sharply $F$-pure, Cartier index of $K_{X_{\ol\eta}} + B_{\ol\eta}$ is not divisible by $p$, and $K_X + B$ is $f$-$\Q$-trivial;

(ii) $\dim (Y) = 1$, $\kappa(X_{\ol\eta}, K_{X_{\ol\eta}} + B_{\ol\eta}) = 1$, and the Iitaka fibration $I_{\ol\eta}: X_{\ol\eta} \to C_{\ol\eta}$ of $K_{X_{\ol\eta}} + B_{\ol\eta}$ is an elliptic fibration to a normal curve $C_{\ol\eta}$;

(iii) $\dim (Y) = 1$, $(X_{\ol\eta}, B_{\ol\eta})$ is sharply $F$-pure, Cartier index of $K_{X_{\ol\eta}} + B_{\ol\eta}$ is not divisible by $p$ and $K_{X_{\ol\eta}} + B_{\ol\eta}$ is ample;

(iv) If $\dim Y = 2$, $\kappa(Y) \geq 0$ and $X_{\bar{\eta}}$ is smooth, then
$$\kappa(X) \geq \kappa(X_{\bar{\eta}}) + \mathrm{max}\{\mathrm{Var}(f), \kappa(Y)\}.$$
\end{thm}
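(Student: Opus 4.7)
The theorem assembles four subadditivity-type inequalities, each of the form $\kappa(X, K_X+B) \geq \kappa(X_{\ol\eta}, K_{X_{\ol\eta}}+B_{\ol\eta}) + c(Y)$ for some contribution $c(Y)$ from the base, specialized to different regimes of the geometric generic fiber and of $Y$. My plan is to treat the four cases separately, since positive-characteristic technology demands a different core input for each, but to use a common backbone: produce a canonical-bundle-formula or weak-positivity statement for an appropriate direct-image sheaf, and then invoke the covering theorem (Theorem \ref{ct}) to push the Kodaira dimension down to $Y$.

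For (i), $K_X+B$ is $f$-$\Q$-trivial, so the fiber contribution vanishes and it suffices to show $\kappa(X, K_X+B)\geq \kappa(Y)$. I will invoke Patakfalvi's canonical bundle formula for sharply $F$-pure fibrations: the hypotheses that $(X_{\ol\eta},B_{\ol\eta})$ is sharply $F$-pure and that $p$ does not divide the Cartier index of $K_{X_{\ol\eta}}+B_{\ol\eta}$ control the trace of Frobenius and yield a $\Q$-divisor $L$ on $Y$ with $K_X+B\sim_{\Q} f^*L$ and $L-K_Y$ pseudoeffective; Theorem \ref{ct} then gives $\kappa(X,K_X+B)=\kappa(Y,L)\geq \kappa(Y)$. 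For (iii), the generic fiber is of general type and sharply $F$-pure, so Ejiri-style weak positivity of $f_*\O_X(m(K_X+B))$ combined with the ampleness of $(K_X+B)_{\ol\eta}$ yields enough global sections to conclude $\kappa(X,K_X+B)\geq 2+\kappa(Y)$. For (ii), I would spread out the elliptic Iitaka fibration $I_{\ol\eta}$ to a relative elliptic fibration $g:X\dashrightarrow S$ over a normal surface $S\to Y$, run a relative MMP as in Theorem \ref{rel-mmp}, and apply a Kodaira-type canonical bundle formula for the elliptic fibration $S\to Y$ to transfer the fiber contribution to the base.

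For (iv), the smoothness of $X_{\ol\eta}$ together with $\kappa(Y)\geq 0$ allows appeal to weak positivity of $f_*\omega^{m}_{X/Y}$ for sufficiently divisible $m$: the moduli-theoretic interpretation of $\mathrm{Var}(f)$ produces a saturated subsheaf whose determinant has Kodaira dimension at least $\mathrm{Var}(f)$, while the non-negativity of $\kappa(Y)$ independently provides a $K_Y$-contribution of Kodaira dimension at least $\kappa(Y)$, and the two contributions combine (rather than add) to give the $\max$ in the stated bound. The principal obstacle throughout is the absence of Kodaira-type vanishing and of generic semipositivity of cotangent sheaves in characteristic $p$; each case sidesteps this through an $F$-purity hypothesis, an explicit elliptic structure, or the smoothness of $X_{\ol\eta}$, allowing one to invoke the specialized positivity theorems of Patakfalvi, Ejiri, and earlier work of the author. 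I expect (iv) to be technically hardest, since one must simultaneously control the moduli and the geometry of a possibly non-isotrivial family over a surface; cases (i) and (iii) reduce almost entirely to the $F$-singularity framework, and (ii) to the structure theory of elliptic fibrations on surfaces.
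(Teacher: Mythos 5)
Your overall plan---reduce each case to the appropriate positivity theorem of Patakfalvi, Ejiri, or the author's earlier work---is the same reduction the paper performs: case (iii) is exactly \cite[Corollary 2.23]{Pa14} plus \cite[Theorem 1.4]{Ej15}, and case (iv) is a direct citation of \cite[Remark 3.3]{CZ15}. However, your argument for case (i) has a genuine gap. After obtaining $K_{X/Y}+B\sim_{\Q}f^*A$ with $A$ nef (the paper gets this from Theorem \ref{rel-mmp} (3.3) together with \cite[Theorem 1.5]{Pa14}), you assert that $\kappa(X,K_X+B)=\kappa(Y,K_Y+A)\geq\kappa(Y)$. This fails precisely in the critical case $g(Y)=1$: there $K_Y\sim 0$, and a nef divisor $A$ of degree $0$ on an elliptic curve may be numerically trivial but non-torsion, in which case $\kappa(Y,K_Y+A)=-\infty$ while $\kappa(Y)=0$. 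Nefness or pseudoeffectivity of the moduli part is not enough; one must prove the degree-zero class is actually torsion (or otherwise effective), which is why the paper invokes the separate result \cite[Theorem 3.2]{EZ16} when $g(Y)=1$. Your argument only covers $g(Y)\geq 2$, where $\deg(K_Y+A)>0$ makes the conclusion immediate.

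On case (ii), your route also diverges from the paper and is too vague at the decisive step. The paper runs a relative LMMP to make $K_X+B$ nef, identifies the elliptic fibration on $X_{\ol\eta}$ induced by the semi-ample divisor $(K_X+B)|_{X_{\ol\eta}}$, and then proves---via the proof of \cite[Theorem 2.8]{EZ16}, which rests on the canonical bundle formula for elliptic fibrations (Theorem \ref{rel-can-ellfib})---that $f_*\O_X(m(K_{X/Y}+B))$ contains a nef subsheaf of rank at least $cm$ for some $c>0$; it is this \emph{linear growth in $m$} of the nef rank that recovers the fiber contribution $\kappa(X_{\ol\eta},K_{X_{\ol\eta}}+B_{\ol\eta})=1$ in the final inequality. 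Your sketch of spreading out the Iitaka fibration and applying a canonical bundle formula names the right ingredients but never explains how the $+1$ from the fiber is actually produced, so as written it does not yield the stated inequality.
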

\begin{proof}
In the following we assume $\kappa(Y) \geq 0$.

In case (i), by Theorem \ref{rel-mmp} (3.3) $K_{X/Y} + B \sim_{\Q} f^*A$, and $A$ is nef by \cite[Theorem 1.5]{Pa14}. If $g(Y) >1$ then the assertion follows by $\kappa(X, K_X +B) = \kappa(Y, K_Y +A) =1$; and if $g(Y) = 1$ then the assertion follows by \cite[Theorem 3.2]{EZ16}.

In case (ii), by running a relative LMMP over $Y$, we may assume $K_X + B$ is $f$-nef. So $K_X + B$ is nef, and $(K_X + B)|_{X_{\ol\eta}}$ is semi-ample (Theorem \ref{rel-mmp} (3.2, 3.4)) and induces an elliptic fibration $I_{\ol\eta}: X_{\ol\eta} \to C_{\ol\eta}$ to a normal curve $C_{\ol\eta}$. Applying the proof of \cite[Theorem 2.8]{EZ16} we can show that $f_*\O_X(m(K_{X/Y} +B))$ contains a nef sub-bundle of rank $\geq cm$ for some $c>0$ and any sufficiently divisible $m>0$. Then the assertion follows from the arguments of \cite[Sec. 4, Step 1-4]{EZ16} by replacing $K_X$ with $K_X+B$\footnote{Note that in Step 3, with $\tilde{B}$ defined in the same way, we have $K_{\tilde{X}} + \tilde{B} = \mu^*(K_X+B) + \tilde{B}'$ for some effective divisor $\tilde{B}'$, so all the arguments apply.}.

In case (iii), combining results of \cite[Corollary 2.23]{Pa14}, we see that all the conditions of \cite[Theorem 1.4]{Ej15} are satisfied, hence the assertion follows.

In case (iv), the assertion is \cite[Remark 3.3]{CZ15}.
\end{proof}

\begin{rem}
For the condition (ii) of the theorem above, the reason why we assume $K_{X_{\ol\eta}} + B_{\ol\eta}$ induces an elliptic fibration lies in that, to show $f_*\O_X(m(K_{X/Y} +B))$ contains a nef sub-bundle, the proof of \cite[Theorem 2.8]{EZ16} uses ``canonical bundle formula'': for a fibration $h: X \to Z$, if $K_{X/Z}$ is $\Q$-trivial over $Z$, then $K_{X/Z} \sim_{\Q} h^*\Delta$ for some effective $\Q$-divisor $\Delta$ on $Z$, which is true for elliptic fibrations by the following theorem.
\end{rem}

\begin{thm}[\textup{\cite[Claim 3.2]{CZ15}}]\label{rel-can-ellfib}
Let $h:X\to Z$ be an elliptic fibration between smooth projective varieties. Then $\kappa(X,K_{X/Z})\ge 0$.
\end{thm}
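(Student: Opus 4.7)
The strategy is to exploit the triviality of $\omega_{X_\eta}$ on the smooth elliptic generic fibre and reduce the claim, via a canonical bundle formula, to a positivity statement on $Z$ that is handled by either direct effectivity of an exceptional contribution or by nefness of a direct image and a cut down to the surface case.

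The first step is to study the natural evaluation map $\alpha\colon h^{*}h_{*}\omega_{X/Z}\to\omega_{X/Z}$. Since $X_\eta$ is a smooth curve of genus one, $h^{0}(X_{\eta},\omega_{X_\eta})=1$, so on the smooth locus $U\subset Z$ of $h$ both source and target are line bundles and cohomology-and-base-change shows that $\alpha$ is an isomorphism on $h^{-1}(U)$. Letting $L:=(h_{*}\omega_{X/Z})^{**}$, which is a line bundle because $Z$ is smooth, the map $\alpha$ factors through a nonzero morphism of line bundles $h^{*}L\to\omega_{X/Z}$ on all of $X$, giving an expression
\[
K_{X/Z}\sim h^{*}L+V,\qquad V\ge 0,
\]
with $V$ an effective Cartier divisor supported on $h^{-1}(Z\setminus U)$. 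This is the usual canonical bundle formula for an elliptic fibration; the assumption $\operatorname{char}k=p>5$ ensures the absence of the most pathological wild fibres and keeps $V$ effective.

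The second step is to deal with $L=h_{*}\omega_{X/Z}$. By weak positivity of direct images for separable fibrations with smooth generic fibre in characteristic $p>5$ (Patakfalvi, Ejiri), $L$ is a nef line bundle on $Z$. If $V\neq 0$ then for every $m\ge 1$ the identity $mK_{X/Z}\sim h^{*}(mL)+mV$ already exhibits $mV$ as a nonzero effective divisor in $|mK_{X/Z}-mh^{*}L|$, and one concludes $\kappa(X,K_{X/Z})\ge 0$ after absorbing $h^*L$ (e.g.\ by Theorem~\ref{ct} if $L$ is already effective, or by passing to a multiple). Thus the remaining case is $V=0$, where $K_{X/Z}\sim h^{*}L$ and it suffices to prove $\kappa(Z,L)\ge 0$.

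Finally, to reduce to $\dim Z=1$, I would restrict to a sufficiently general smooth complete-intersection curve $C\subset Z$ so that $h^{-1}(C)\to C$ is again an elliptic fibration of a smooth surface to a smooth curve. On such a curve the classical canonical bundle formula for elliptic surfaces gives either $\deg L|_{C}>0$ (so Riemann--Roch gives a section of some $mL|_{C}$) or $\deg L|_{C}=0$, in which case $L|_{C}$ is forced to be torsion and $mL|_{C}\sim 0$ for some $m>0$. Either way, $\kappa(C,L|_{C})\ge 0$; using the nefness of $L$ and Theorem~\ref{ct} applied to a resolution of the Stein factorisation, this propagates back to $\kappa(Z,L)\ge 0$ and hence to $\kappa(X,K_{X/Z})\ge 0$.

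The main obstacle is establishing the clean canonical bundle formula $K_{X/Z}\sim h^{*}L+V$ with $V$ effective in positive characteristic; the classification of degenerate fibres of elliptic fibrations is more subtle in characteristic $p$, although $p>5$ eliminates the most troublesome wild contributions. A secondary, but softer, difficulty is converting nefness of $L$ to effectivity of some multiple, which is what the reduction to an elliptic surface is designed to do.
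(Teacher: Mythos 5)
The paper does not actually prove this statement --- it is imported verbatim from \cite[Claim 3.2]{CZ15} --- so the only question is whether your argument stands on its own. It does not: everything is reduced, correctly in spirit, to positivity of the fundamental line bundle $L=(h_*\omega_{X/Z})^{**}$, but that positivity is never established. A smaller issue first: $V$ in $K_{X/Z}\sim h^{*}L+V$ is only visibly effective over the locus where $h_*\omega_{X/Z}$ is invertible; if $h$ is not equidimensional, the preimage of the (codimension $\ge 2$) non-invertible locus may contain divisors of $X$ on which $V$ could carry negative coefficients, so effectivity needs an extra argument. More seriously, your case $V\neq 0$ does not conclude: from $mK_{X/Z}\sim h^{*}(mL)+mV$ with $mV\ge 0$ you get $\kappa(X,K_{X/Z})\ge 0$ only if you already know $\kappa(Z,L)\ge 0$. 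Nefness of $L$ is not enough --- a nef, even numerically trivial, line bundle can have $\kappa=-\infty$, and then ``absorbing $h^{*}L$'' has no content, since $h^0(h^{*}(mL)+mV)\ge h^0(mV)$ is exactly the inequality you do \emph{not} have. So in both cases the proof rests entirely on $\kappa(Z,L)\ge 0$.

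Your argument for $\kappa(Z,L)\ge 0$ has two gaps I do not see how to close. (i) When $\deg L|_{C}=0$ on a general complete-intersection curve you assert $L|_{C}$ ``is forced to be torsion''; nothing forces this. A non-torsion numerically trivial line bundle (say pulled back from an abelian variety) is nef, has degree zero on every curve, and has no effective multiple; ruling this out for the specific bundle $(h_*\omega_{X/Z})^{**}$ is precisely the content of the claim, and it requires structure theory of elliptic fibrations, not nefness. (ii) Even granting $\kappa(C,L|_{C})\ge 0$ for general curves $C\subset Z$, there is no mechanism to propagate this to $\kappa(Z,L)\ge 0$: Theorem \ref{ct} concerns pullback along \emph{surjective} morphisms, not restriction to subvarieties, and effectivity on hyperplane sections does not imply effectivity on $Z$ (the same numerically trivial example defeats it). The missing idea is a genuinely global source of sections of a multiple of $L$: after a base change giving the fibration a section one passes to a Weierstrass model, whose discriminant is a nonzero section of $L^{\otimes 12}$; over a curve one can instead quote the Bombieri--Mumford canonical bundle formula, where $\deg L=\chi(\O_X)+\mathrm{length}(T)\ge 0$ together with an analysis of the degree-zero case. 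Some such input from the classification of elliptic fibrations is unavoidable here, and ``nef plus cut to curves'' cannot substitute for it. (The invocation of Patakfalvi--Ejiri weak positivity is also delicate for elliptic fibres in characteristic $p$, but that point is moot since nefness would not suffice anyway.)
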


\medskip

\begin{cor}\label{Iit-cor}
Assume $\mathrm{char}~k =p >5$. Let $f: X \to Y$ be a separable fibration from a smooth projective 3-fold to a smooth projective variety of dimension $1$ or $2$.
Denote by $\tilde{X}_{\bar{\eta}}$ a smooth projective birational model of $X_{\bar{\eta}}$.
Then
$$\kappa(X) \geq \kappa(\tilde{X}_{\bar{\eta}}) + \kappa(Y).$$

\medskip

In particular, if moreover both $X$ and $Y$ are non-uniruled then $\kappa(X) \geq 0$.
\end{cor}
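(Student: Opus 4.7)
The plan is to reduce the desired inequality to the cases of Theorem \ref{Iit-conj}. If either $\kappa(Y) = -\infty$ or $\kappa(\tilde{X}_{\bar\eta}) = -\infty$ the bound is automatic, so I may assume both Kodaira dimensions are non-negative. Invoking Remark \ref{bc-smthing}, I iterate the absolute Frobenius base change $F_Y^e: Y^e \to Y$ for some $e \gg 0$ and take a smooth resolution $\sigma: X' \to X \times_Y Y^e$. The resulting fibration $f': X' \to Y^e$ has smooth geometric generic fiber, and a standard analysis of the Frobenius base change (as in \cite[Proof of Corollary 1.3]{BCZ15}) shows $\kappa(X) \geq \kappa(X')$, $\kappa(Y^e) = \kappa(Y)$, and $\kappa(X'_{\bar\eta}) = \kappa(\tilde{X}_{\bar\eta})$, so it suffices to prove the inequality for $f'$.

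When $\dim Y = 2$ the conclusion is immediate from Theorem \ref{Iit-conj}(iv):
$$\kappa(X') \geq \kappa(X'_{\bar\eta}) + \max\{\mathrm{Var}(f'),\kappa(Y^e)\} \geq \kappa(X'_{\bar\eta}) + \kappa(Y^e).$$
When $\dim Y = 1$ the smooth geometric generic fiber is a surface $S$ with $\kappa(S) \geq 0$; I would first run a relative MMP for $X'$ over $Y^e$ via Theorem \ref{rel-mmp}(1), producing a relative log minimal model $X^m \to Y^e$ whose generic fiber is a smooth minimal surface, and then split on $\kappa(S) \in \{0,1,2\}$. When $\kappa(S) = 0$, $K_{X^m/Y^e}$ is $f^m$-$\Q$-trivial and Theorem \ref{Iit-conj}(i) applies; when $\kappa(S) = 1$, the Iitaka fibration of $K_S$ is an elliptic fibration to a smooth curve and Theorem \ref{Iit-conj}(ii) applies; when $\kappa(S) = 2$, after passing to the canonical model of $S$ (or contracting further on $X^m$) $K_S$ becomes ample and Theorem \ref{Iit-conj}(iii) applies. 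Each case yields $\kappa(X^m) \geq \kappa(S) + \kappa(Y^e)$, establishing the inequality.

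The ``in particular'' clause follows because non-uniruledness of $X$ transfers to $X_{\bar\eta}$ (and its smooth model) via spreading-out of rational curves, while non-uniruled curves and surfaces in characteristic $p > 5$ have non-negative Kodaira dimension. The main obstacle is verifying the hypotheses of Theorem \ref{Iit-conj}(i) and (iii), namely sharp $F$-purity of $(S,0)$ and the prime-to-$p$ Cartier index of $K_S$: for $\kappa(S) = 0$ the Cartier index divides $6$ by the classification of minimal surfaces of Kodaira dimension zero in characteristic $p > 5$, and sharp $F$-purity reduces to the smoothness of $S$; for $\kappa(S) = 2$ we work on the canonical model and must check sharp $F$-purity of its klt singularities. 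A secondary delicate point is the comparison $\kappa(X) \geq \kappa(X')$ under the Frobenius base change, which requires controlling how pluricanonical sections behave under the purely inseparable cover $X \times_Y Y^e \to X$.
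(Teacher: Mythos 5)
Your proposal follows essentially the same route as the paper: reduce to a smooth geometric generic fiber via Frobenius base change and resolution, descend the inequality from $X'$ to $X$, and then invoke Theorem \ref{Iit-conj}(iv) for $\dim Y=2$ and a relative MMP plus the case split $\kappa(S)\in\{0,1,2\}$ feeding into Theorem \ref{Iit-conj}(i)--(iii) for $\dim Y=1$. The one step you flag but do not carry out, the comparison $\kappa(X)\geq\kappa(X')$, is exactly where the paper invests its only real computation (Proposition \ref{compds} combined with $\sigma'^*f^*K_Y\sim f'^*p^eK_{Y'}$ and the covering theorem), and your remaining verifications (sharp $F$-purity via canonical singularities being strongly $F$-regular for $p>5$, the elliptic Iitaka fibration in the $\kappa=1$ case) match the paper's.
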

\begin{proof}
We can assume $\kappa(\tilde{X}_{\bar{\eta}}) \geq 0$ and $\kappa(Y)\geq 0$. By Remark \ref{bc-smthing}, we can take some $e$-th absolute Frobenius iteration $F_Y^e: Y'= Y^e \to Y$ such that, for a smooth resolution $\sigma: X' \to \bar{X}'= X\times_Y Y'$, the geometric generic fiber of $f': X'\to Y'$ is smooth.
By Proposition \ref{compds}, there exists an effective $\sigma'$-exceptional divisor $E'$ on $X'$, where $\sigma':X' \to X$ denotes the natural morphism, such that
$$K_{X'/Y'} \leq \sigma'^*K_{X/Y} + E.$$
It follows that
\begin{equation}
\begin{split}
\kappa(X', K_{X'}) & = \kappa(X', K_{X'/Y'} + f'^*K_{Y'}) \\
                   & \leq \kappa(X', \sigma'^*K_{X/Y} + E + f'^*K_{Y'})\\
                   & = \kappa(X', \sigma'^*K_{X} + E + (1 - p^e) f'^*K_{Y'}) \hspace{1cm} \text{by $\sigma'^*f^*K_Y \sim f'^*p^eK_{Y'}$} \\
                   & \leq \kappa(X', \sigma'^*K_{X} + E) =  \kappa(X, K_X) \hspace{1cm} \text{by Theorem \ref{ct}}.
\end{split}
\end{equation}
So to show the inequality of the theorem, it suffices to show that $\kappa(X') \geq \kappa(X'_{\bar{\eta}}) + \kappa(Y')$. If $\dim Y = 2$, then we are done by Theorem \ref{Iit-conj} (iv). Let's consider the case $\dim Y = 1$.
We assume $\kappa(\tilde{X}_{\bar{\eta}}) = \kappa(X'_{\bar{\eta}}) \geq 0$, then $K_{X'}$ is pseudo-effective. By running a relative MMP on $X'$ over $Y'$, we can assume $K_{X'}$ is nef by Theorem \ref{rel-mmp} (3.4),
and if $\kappa(X'_{\bar{\eta}}) = 2$ we assume $K_{X'_{\bar{\eta}}}$ is ample by considering the relative canonical model. Then $X'_{\bar{\eta}}$ has at most canonical singularities, which is strongly $F$-regular by \cite{Har98} and hence is sharply $F$-pure.
Note that since $p>5$, in case $\kappa(X'_{\bar{\eta}}) = 1$ the Iitaka fibration $I_{\ol\eta}: X'_{\ol\eta} \to C_{\ol\eta}$ is an elliptic fibration (\cite[Theorem 7.18]{Ba01}).
By Theorem \ref{rel-mmp} (3.2, 3.3) we can apply Theorem \ref{Iit-conj} to the fibration $f': X' \to Y'$ to show the subadditivity.

For the remaining assertion, assume that both $X$ and $Y$ are non-uniruled. Then $\tilde{X}_{\bar{\eta}}$ is non-uniruled. By \cite[Theorem 13.2]{Ba01} $\kappa(\tilde{X}_{\bar{\eta}})$ and $\kappa(Y)$ are non-negative. Applying the subadditivity above gives that $\kappa(X) \geq 0$.
\end{proof}

\begin{cor}\label{div-k=0}
Assume $\mathrm{char}~k =p >5$. Let $X$ be a normal $\Q$-factorial klt projective 3-fold with $K_X \sim_{\Q} 0$, and let $D$ be an effective nef $\Q$-divisor on $X$.
Assume that $X$ has a morphism $f: X\to Y$ to an elliptic curve and that $X_{\ol\eta}$ has at most canonical singularities. Then either $D=0$ or $\kappa(X, D) \geq 1$.
\end{cor}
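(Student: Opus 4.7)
\textit{Proof plan.} Fix $t \in \mathbb{Q}_{>0}$ small enough that $(X, tD)$ is klt; then $K_X + tD \sim_{\Q} tD$ is nef because $K_X \sim_{\Q} 0$. The fibration $f$ is separable by Proposition \ref{prop-of-sep}(2), so $X_\eta$ is integral and $X_{\ol\eta}$ is a normal projective surface with canonical singularities satisfying $K_{X_{\ol\eta}} \sim_{\Q} 0$. The strategy is to split according to whether $D$ is vertical over $Y$, and in each branch to apply the appropriate subadditivity from Theorem \ref{Iit-conj} after a suitable relative MMP.

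If $D|_{X_{\ol\eta}} = 0$, the relative Iitaka dimension of $K_X + tD$ equals zero, hence Theorem \ref{rel-mmp}(3.3) shows $K_X + tD$ is semi-ample over $Y$. Since the relative Iitaka model is $Y$ itself, a sufficiently divisible multiple satisfies $m(K_X + tD) \sim f^*L$ for a line bundle $L$ on $Y$; combined with $K_X \sim_{\Q} 0$ this yields $ND \sim f^*L'$ for a positive integer $N$ and a line bundle $L'$ on $Y$ admitting a nonzero global section. The covering theorem (Theorem \ref{ct}) then gives $\kappa(X, D) = \kappa(Y, L')$; either $\deg L' > 0$ and $\kappa(Y, L') = 1$, or $\deg L' = 0$ and $L' \cong \mathcal{O}_Y$ (a degree-zero line bundle on an elliptic curve with a section is trivial), which forces $D \sim_{\Q} 0$ and hence $D = 0$ by effectiveness.

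If $D|_{X_{\ol\eta}} \neq 0$, then $D_{\ol\eta}$ is a nonzero nef effective $\Q$-divisor on the K-trivial surface $X_{\ol\eta}$. Passing to its minimal resolution (which preserves $K \sim_{\Q} 0$, since canonical singularities have no discrepancies, and preserves the Iitaka dimension of the divisor by the covering theorem) and applying Riemann--Roch together with the classification of K-trivial smooth surfaces in characteristic $p > 5$ (K3, abelian, Enriques, bielliptic; no quasi-phenomena), one obtains $\kappa(X_{\ol\eta}, D_{\ol\eta}) \geq 1$, with the induced Iitaka fibration being an honest elliptic fibration onto a normal curve when this dimension equals $1$. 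In the subcase $\kappa(X_{\ol\eta}, D_{\ol\eta}) = 1$, Theorem \ref{Iit-conj}(ii) applies to $(X, tD)$ and gives $\kappa(X, K_X + tD) \geq 1$. In the subcase $\kappa(X_{\ol\eta}, D_{\ol\eta}) = 2$, Theorem \ref{rel-mmp}(3.1) shows $K_X + tD$ is semi-ample over $Y$; passing to the relative log canonical model $g \colon X \to X^c$ (with a small $\Q$-factorialization if needed) produces a klt pair $(X^c, tD^c)$ with $K_{X^c_{\ol\eta}} + tD^c_{\ol\eta}$ ample, and Theorem \ref{Iit-conj}(iii) applied to $f^c \colon X^c \to Y$ yields $\kappa(X, D) = \kappa(X^c, K_{X^c} + tD^c) \geq 2$.

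The main obstacle is verifying the sharp $F$-purity and the $p$-coprime Cartier-index hypotheses of Theorem \ref{Iit-conj}(iii) in the subcase $\kappa(X_{\ol\eta}, D_{\ol\eta}) = 2$---this is what necessitates the reduction to the relative log canonical model and a judicious choice of the coefficient $t$, exploiting the strong $F$-regularity of canonical surface singularities in characteristic $p > 5$ together with smallness of $t$. In the $\kappa = 1$ subcase, identifying the Iitaka fibration of $D_{\ol\eta}$ as elliptic rather than quasi-elliptic is where the hypothesis $p > 5$ is essential. The vertical case is comparatively elementary, ultimately resting on the structure of degree-zero line bundles with sections on an elliptic curve.
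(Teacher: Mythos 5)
Your proposal is correct and, in its essential mechanics, coincides with the paper's proof: after scaling $D$ to make the pair klt (and, as you note, arranging the Cartier index to be prime to $p$ --- the paper does this by replacing $D$ with $\frac{p^n}{p^n+1}D$), you split according to the relative Iitaka dimension of $D$ and apply Theorem \ref{rel-mmp} (3.3) in the trivial case, Theorem \ref{Iit-conj} (ii) in the elliptic-fibration case, and the relative log canonical model together with strong $F$-regularity of klt surface singularities and Theorem \ref{Iit-conj} (iii) in the big case. The one place you genuinely diverge is the opening dichotomy ``$D$ vertical versus $\kappa(X_{\ol\eta},D_{\ol\eta})\ge 1$'': you assert a priori that a nonzero nef effective divisor on a $K$-trivial surface has positive Iitaka dimension, citing Riemann--Roch and the classification. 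That statement is true, but Riemann--Roch alone does not deliver it when the minimal resolution of $X_{\ol\eta}$ is an Enriques or bielliptic surface (there $\chi(\O)$ equals $1$ or $0$, so an isotropic nef effective class need not visibly move; one needs the structure of genus-one pencils on such surfaces). The paper sidesteps this entirely: in the case $\kappa(X_\eta,D_\eta)=0$ it applies Theorem \ref{rel-mmp} (3.3) directly to get $D\sim_{\Q}K_X+D\sim_{\Q}f^*A$ with $A$ effective on the elliptic curve, so verticality of $D$ is a consequence of the argument rather than an input, and no surface classification is needed. If you replace your classification step by this observation, your argument becomes identical to the paper's; as written, that step is the only point needing further justification.
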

\begin{proof}
We can replace $D$ by $tD$ for a sufficiently small rational number $t>0$ and assume that both $(X, D)$ and $(X_{\bar{\eta}}, D_{\bar{\eta}})$ are klt, then replace $D$ by $\frac{p^n}{p^n+1}D$ for sufficiently large $n$ to assume that $D$ has Weil index not divisible by $p$.

If $\kappa(X_{\eta}, D_{\eta}) = 0$, then by Theorem \ref{rel-mmp} (3.3) $D \sim_{\Q} K_{X} + D \sim_{\Q} f^*A$, and $A$ can be assumed effective since $D$ is effective. We conclude that either $D =0$ or that $\kappa(X, D) = 1$.

If $\kappa(X_{\eta}, D_{\eta}) = 1$, then $D_{\ol\eta} \sim_{\Q} K_{X_{\ol\eta}} + D_{\ol\eta}$ is semi-ample by Theorem \ref{rel-mmp} (3.2). Denote the associated map to $D_{\ol\eta}$ by $I_{\ol\eta}: X_{\ol\eta} \to C_{\ol\eta}$, and by $G$ a general fiber of $I_{\ol\eta}$ which has arithmetic genus $p_a(G) = 1$ by adjunction formula. Since $\mathrm{char}~k>5$ and $X_{\ol\eta}$ is an algebraic surface with at most canonical singularities (hence normal), we have that $C_{\ol\eta}$ is normal, and by \cite[Theorem 7.18]{Ba01} $I_{\ol\eta}: X_{\ol\eta} \to C_{\ol\eta}$ is an elliptic fibration. So applying Theorem \ref{Iit-conj} (ii), we conclude that
$$\kappa(X, D) = \kappa(X, K_X +D) \geq 1.$$

If $\kappa(X_{\eta}, D_{\eta}) = 2$, we consider the relative log-canonical model $(X', D')$ (Theorem \ref{rel-mmp} (3.1)). We can check that $(X'_{\bar{\eta}}, D'_{\bar{\eta}})$ is klt, in particular $X'_{\bar{\eta}}$ has klt singularities, hence is strongly $F$-regular (\cite{Har98}). Replacing $D'$ with the multiplication by a small rational number, we can assume $(X'_{\bar{\eta}}, D'_{\bar{\eta}})$ is strongly $F$-regular (\cite[Lemma 2.8]{HX15}). Finally applying Theorem \ref{Iit-conj} (iii), we  conclude that
$$\kappa(X, D) = \kappa(X, K_X +D) = \kappa(X', K_{X'} +D') \geq 2.$$

In conclusion, the proof is completed.
\end{proof}

\subsection{Foliations and purely inseparable morphisms}
Let $X$ be a smooth variety. Recall that a (1-)\emph{foliation} is a saturated subsheaf $\mathcal{F} \subset T_X$ which is involutive (i.e., $[\mathcal{F}, \mathcal{F}] \subset \mathcal{F}$) and $p$-closed (i.e., $\xi^p \in \mathcal{F}, \forall \xi \in \mathcal{F}$). A foliation $\mathcal{F}$ is called \emph{smooth} if it is locally free.
Denote
$$Ann(\mathcal{F}) = \{a \in \O_X| \xi(a) = 0, \forall \xi \in \mathcal{F}\}.$$

\begin{prop}\label{flt}
Let $X$ be a smooth variety and $\mathcal{F}$ a foliation on $X$.

(1) We get a normal variety $Y = X/\mathcal{F}= \mathrm{Spec} Ann(\mathcal{F})$, and there exist natural morphisms $\pi:X \to Y$ and $\pi': Y \to X^{(1)}$ fitting into the following commutative diagram
$$
\xymatrix{
&X\ar[d]^{\pi}\ar[dr]^{F_X}  &\\
&Y\ar[r]^>>>>>{\pi'}  &X^{(1)}.
}
$$
Moreover $\deg \pi = p^r$ where $r = \rank~ \mathcal{F}$.

(2) There is a one-to-one correspondence between foliations and normal varieties between $X$ and $X^{(1)}$, by the correspondence $\mathcal{F} \mapsto X/\mathcal{F}$ and the inverse correspondence $Y \mapsto Ann(\O_Y)$.

(3) The variety $Y$ is regular if and only if $\mathcal{F}$ is smooth.

(4) If $Y_0$ denotes the regular locus of $Y$ and $X_0 = \pi^{-1}Y_0$, then
$$K_{X_0} \sim \pi^*K_{Y_0} + (p-1)\det \mathcal{F}|_{X_0}.$$
\end{prop}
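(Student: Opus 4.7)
The plan is to use the Jacobson-Bourbaki correspondence for restricted Lie algebras in characteristic $p$, together with a local straightening argument for smooth foliations.

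For (1), I would start from the observation that $\O_X^p \subseteq Ann(\mathcal{F})$: any derivation $\xi$ kills $p$-th powers, since $\xi(a^p) = p a^{p-1}\xi(a) = 0$. Thus $\O_Y := Ann(\mathcal{F})$ is an $\O_X^p$-subalgebra of $\O_X$ and defines $Y := \mathrm{Spec}\, \O_Y$ sitting between $X$ and $X^{(1)}$ with the required factorization of Frobenius. For normality, any element of the integral closure of $\O_Y$ in $K(Y)$ lies in $\O_X$ (as $\O_X$ is integrally closed and $\O_Y \subseteq \O_X$), and simultaneously lies in $K(Y)$, hence is killed by every $\xi \in \mathcal{F}$ and belongs to $Ann(\mathcal{F}) = \O_Y$. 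The degree formula $\deg \pi = p^r$ I would check at the generic point, where $\mathcal{F}$ is locally free by saturation; the classical Jacobson dimension formula $[K(X) : K(X)^{\mathcal{F}}] = p^r$ for a restricted Lie subalgebra of $\mathrm{Der}(K(X))$ applies, and simultaneously identifies $K(Y) = K(X)^{\mathcal{F}}$.

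For (2), the maps $\mathcal{F} \mapsto Ann(\mathcal{F})$ and $\O_Y \mapsto \{\xi \in T_X : \xi(\O_Y) = 0\}$ should be mutually inverse. One inclusion $\mathcal{F} \subseteq \{\xi : \xi(Ann(\mathcal{F})) = 0\}$ is tautological; the reverse is precisely the content of Jacobson-Bourbaki Galois theory for infinitesimal group scheme actions of height one, which is where the involutive and $p$-closed hypotheses enter in an essential way. Part (3) then follows by a local straightening: given a smooth foliation $\mathcal{F}$ of rank $r$, involutivity combined with $p$-closedness produces local parameters $x_1, \ldots, x_n$ such that $\mathcal{F}$ is generated by $\partial_{x_1}, \ldots, \partial_{x_r}$ (the characteristic-$p$ analogue of the Frobenius integrability theorem). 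Then locally $\O_Y = \O_X^p[x_{r+1}, \ldots, x_n]$, which is regular. Conversely, if $Y$ is regular, $\pi$ is finite flat between regular schemes, and $\mathcal{F}$ arises as the kernel of a surjection $T_X \twoheadrightarrow \pi^*T_Y$ of vector bundles, hence is locally free.

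For (4), over $Y_0$ the morphism $\pi: X_0 \to Y_0$ is finite flat between regular schemes, so $\omega_{X_0/Y_0}$ is a line bundle with $K_{X_0} \sim \pi^*K_{Y_0} + D$ for a Cartier divisor $D$ satisfying $\O_{X_0}(D) \cong \omega_{X_0/Y_0}$. Working with the straightened coordinates from (3), $\O_X$ is free over $\O_Y$ with basis $\{x_1^{a_1}\cdots x_r^{a_r} : 0 \le a_i \le p-1\}$, and the dualizing module $\mathcal{H}om_{\O_Y}(\O_X, \O_Y)$ is generated as an $\O_X$-module by the trace-like functional $\phi$ dual to $x_1^{p-1}\cdots x_r^{p-1}$. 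I would pair $\phi$ with $(\partial_{x_1} \wedge \cdots \wedge \partial_{x_r})^{\otimes(p-1)}$, a local generator of $(\det \mathcal{F})^{\otimes(p-1)}$, and check by a Jacobian computation that this identification is independent of the chosen straightening, thereby globalizing to $\omega_{X_0/Y_0} \cong (\det \mathcal{F}|_{X_0})^{\otimes(p-1)}$. The main obstacle will be this coordinate-change bookkeeping in (4): one must verify that under any substitution preserving $\mathcal{F}$, the generator $\phi$ transforms by exactly the $(p-1)$-th power of the Jacobian of the tangential piece, which is the source of the factor $p-1$ in the formula.
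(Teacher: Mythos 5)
The paper does not actually prove this proposition: its ``proof'' is the single line ``Refer to \cite[p.56-58]{MP97} or \cite{Ek87}.'' Your sketch follows essentially the same route as those references do: Jacobson's Galois correspondence for exponent-one purely inseparable extensions for (1) and (2), formal local straightening of a smooth $p$-closed involutive subsheaf for (3), and the identification $\omega_{X_0/Y_0}\cong(\det\mathcal{F}|_{X_0})^{\otimes(p-1)}$ via the dualizing module of the finite flat map $\pi$ for (4). The normality argument ($\mathcal{O}_X$ integrally closed plus $K(Y)\cap\mathcal{O}_X=Ann(\mathcal{F})$ by the quotient rule), the degree count at the generic point, and the plan for (4) are all sound and are where the real content lies.

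One step is genuinely wrong as written: in the converse direction of (3) you claim $\mathcal{F}$ is the kernel of a \emph{surjection} $T_X\twoheadrightarrow\pi^*T_Y$ of vector bundles. The natural map $T_X\to\pi^*T_Y$ (restriction of derivations, dual to $\pi^*\Omega^1_Y\to\Omega^1_X$) is essentially never surjective in this situation: already for $X=\mathbb{A}^2$, $\mathcal{F}=\langle\partial_{x_1}\rangle$ and $Y=\mathrm{Spec}\,k[x_1^p,x_2]$, the basis section $\partial_{x_1^p}$ of $\pi^*T_Y$ is not in the image, since every derivation of $\mathcal{O}_X$ kills $x_1^p$. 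So this exact sequence only exhibits $T_X/\mathcal{F}$ as a subsheaf of a locally free sheaf, which gives torsion-freeness of the quotient but not local freeness of $\mathcal{F}$; the kernel of a non-constant-rank bundle map need not be a subbundle. The standard repair, and what the cited sources do, is to show that when $Y$ is regular the morphism $\pi$ is finite flat (miracle flatness) and $\Omega^1_{X/Y}$ is locally free of rank $r$ (via a local $p$-basis of $\mathcal{O}_X$ over $\mathcal{O}_Y$), whence $\mathcal{F}=\mathcal{H}om_{\mathcal{O}_X}(\Omega^1_{X/Y},\mathcal{O}_X)$ is locally free. With that substitution your outline matches the references' proof.
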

\begin{proof}
Refer to \cite[p.56-58]{MP97} or \cite{Ek87}.
\end{proof}

\section{Abundance for 3-folds with $\kappa(X) \geq 1$}\label{sec-s-amp-k=2}

The following result in case $\kappa(X, K_X + B) = 2$ has been proved by Waldron in \cite{Wal15}, where he obtains some results in arbitrary dimension.
In characteristic zero, similar results have been proved by Kawamata by using Kollar's vanishing (\cite[Theorem 6.1]{Ka85I}). Here for readers' conveniences, we borrow Waldron's idea and give a quick proof.
\begin{thm}\label{s-amp-k=2}
Assume $\mathrm{char}~k =p >5$. Let $(X, B)$ be a $\Q$-factorial klt projective 3-fold. Assume that $K_X + B$ is nef. If $\kappa(X, K_X + B) \geq 1$, then $K_X + B$ is semi-ample.
\end{thm}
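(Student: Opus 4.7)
The plan is to split by $\kappa := \kappa(X, K_X+B) \in \{1, 2, 3\}$ and reduce each case to the relative semi-ampleness statements (3.1)--(3.3) of Theorem \ref{rel-mmp}, via the Iitaka fibration of $D := K_X + B$. The case $\kappa = 3$ is immediate: $D$ is nef and big, so Theorem \ref{rel-mmp}(3.1) gives semi-ampleness at once.

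For $\kappa \in \{1, 2\}$, I would fix $m \gg 0$ sufficiently divisible so that $|mD|$ induces the Iitaka fibration, and take a log resolution $\pi : \tilde X \to X$ of $(X, B)$ that simultaneously resolves the Iitaka map. Stein-factorizing gives a fibration $f : \tilde X \to Y$ with $Y$ smooth projective of dimension $\kappa$, and a decomposition $\pi^*(mD) \sim f^*H + F$ with $H$ very ample on $Y$ and $F \geq 0$ an $f$-vertical divisor. I would then pick the standard klt boundary $\tilde B \geq 0$ so that $(\tilde X, \tilde B)$ is klt and $K_{\tilde X} + \tilde B \sim_{\Q} \pi^*D + E$ for some effective $\pi$-exceptional divisor $E$. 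On the generic fiber $\tilde X_{\eta}$, integral by Proposition \ref{prop-of-sep}, both $(\pi^*D)|_{\tilde X_\eta}$ and $(K_{\tilde X} + \tilde B)|_{\tilde X_\eta}$ have Kodaira dimension $0$. The next step is to run the $(K_{\tilde X} + \tilde B)$-MMP over $Y$ using Theorem \ref{rel-mmp}(1), ending at a relative minimal model $\tilde f' : \tilde X' \to Y$ on which $K_{\tilde X'} + \tilde B'$ is $\tilde f'$-nef; since the MMP is an isomorphism over the generic point of $Y$, the Kodaira dimension of $(K_{\tilde X'} + \tilde B')|_{\tilde X'_\eta}$ remains $0$.

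In the case $\kappa = 1$ the base $Y$ is a smooth curve, so I would apply Theorem \ref{rel-mmp}(3.3) to obtain that $K_{\tilde X'} + \tilde B'$ is semi-ample over $Y$; relative $\Q$-triviality on the generic fiber forces $K_{\tilde X'} + \tilde B' \sim_{\Q} \tilde f'^*A$ for some $\Q$-divisor $A$ on $Y$, and $\deg A > 0$ follows from $\kappa(\tilde X', K_{\tilde X'} + \tilde B') \geq 1$. Then $A$ is ample on $Y$, so $K_{\tilde X'} + \tilde B'$ is semi-ample on $\tilde X'$, and I would transport this semi-ample structure back through the $Y$-MMP and through $\pi$ (all contracted loci lie in $\pi$-exceptional divisors because $\pi^*D$ is already nef) to conclude that $D$ is semi-ample on $X$. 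In the case $\kappa = 2$ (Waldron's theorem from \cite{Wal15}), $Y$ is a smooth surface; my plan is to use relative $\Q$-triviality on the curve fibers to obtain $K_{\tilde X'} + \tilde B' \sim_{\Q} \tilde f'^*A$ with $A$ nef and big on $Y$, and then to invoke surface abundance in characteristic $p > 5$ to deduce that $A$ is semi-ample, and hence that $D$ is semi-ample on $X$.

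The hard part is the $\kappa = 2$ case, since neither (3.2) nor (3.3) of Theorem \ref{rel-mmp} directly delivers semi-ampleness over a surface base; Waldron's descent, which produces a nef and big $\Q$-divisor $A$ on $Y$ and then invokes abundance on the surface $Y$, is the essential new input. By contrast the $\kappa = 1$ case is routine once the relative MMP over the curve base is in place, because Theorem \ref{rel-mmp}(3.3) applies immediately.
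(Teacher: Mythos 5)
Your treatment of the cases $\kappa = 3$ and $\kappa = 1$ matches the paper's: nef and big is disposed of by Theorem \ref{rel-mmp}(3.1), and over a curve base the relative minimal model over the Iitaka base together with Theorem \ref{rel-mmp}(3.3) yields $K_{X''}+B'' \sim_{\Q} g''^{*}H$ with $H$ ample, after which the negativity lemma (two minimal models of the same pair have equal pullbacks on a common resolution) transports semi-ampleness back to $(X,B)$. The $\kappa = 2$ case, however, contains two genuine gaps, and that is precisely where all the work lies. First, the step ``use relative $\Q$-triviality on the curve fibers to obtain $K_{\tilde X'}+\tilde B' \sim_{\Q} \tilde f'^{*}A$'' has no justification: over a surface base none of (3.1)--(3.3) gives relative semi-ampleness ((3.2) only gives semi-ampleness on the generic fiber), and descending a relatively numerically trivial nef divisor to a pullback from the base is a canonical-bundle-formula type statement that is not available in characteristic $p$ in this generality --- this is exactly the difficulty flagged in the Remark after Theorem \ref{Iit-conj}, where even for elliptic fibrations only the weaker statement $\kappa(X,K_{X/Z})\geq 0$ is recorded. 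Second, even granting $K_{\tilde X'}+\tilde B'\sim_{\Q}\tilde f'^{*}A$ with $A$ nef and big on $Y$, ``surface abundance'' does not apply: abundance concerns $K_Y+\Delta$ for a klt pair, whereas $A$ is an arbitrary nef and big $\Q$-divisor, and such a divisor need not be semi-ample over a general algebraically closed field of characteristic $p$ (its degree-zero restriction to a curve in the exceptional locus $E(A)$ can be non-torsion). By \cite[Theorem 1.9]{Ke99}, semi-ampleness of $A$ is equivalent to semi-ampleness of $A|_{E(A)}$, and controlling $E(A)$ is the actual content of the argument, not a black box one can invoke.

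The paper's proof (following \cite{Wal15}) takes a genuinely different and, as far as the available tools go, unavoidable route: since $\kappa=\nu=2$, Keel's theory endows $K_X+B$ with a morphism $h:X\to Z$ to a two-dimensional algebraic space (\cite[Lemma 7.2]{BW14}); Lemma \ref{flatten} produces an equi-dimensional model by contracting, via a carefully chosen LMMP, all divisors lying over the finitely many bad points of $Z$; one then restricts to a general very ample surface $S\subset X$, applies Keel's criterion on the normalization $S^{\nu}$ to obtain semi-ampleness there, and uses norms of divisors down to $Z_1$ and $Z$ to prove that $Z$ is projective and that $K_X+B$ descends to an ample $\Q$-Cartier divisor on $Z$ (checked by Nakai--Moishezon). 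None of this is captured by your sketch; if your intention is to cite Waldron's theorem outright for $\kappa=2$, say so explicitly, but the replacement argument you outline does not close.
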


\medskip

As an important preparation, we prove the following lemma, which is included in \cite{Wal15}. For readers' conveniences, the proof is sketched.
\begin{lem}\label{flatten}
Assume $\mathrm{char}~k =p >5$ and $k$ is uncountable. Let $(X, B)$ be a $\Q$-factorial klt projective 3-fold. Assume that $K_X + B$ is nef with $\kappa(K_X + B) = \nu(K_X + B) = 2$. Then $K_X +B$ is endowed with a morphism $h: X \to Z$ to an algebraic space $Z$ of dimension two, and there exists another $\Q$-factorial minimal model $(X^+, B^+)$ of $(X, B)$ over $Z$ such that, the natural morphism $h^+: X^+ \to Z$  is equi-dimensional.
\end{lem}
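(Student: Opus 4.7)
The plan is two-phase: first promote $K_X+B$ to a morphism $h \colon X \to Z$ onto a two-dimensional algebraic space, then run a relative MMP over $Z$ to obtain an equi-dimensional $h^+ \colon X^+ \to Z$ while staying inside the class of $\mathbb{Q}$-factorial minimal models of $(X,B)$.

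For the first phase, since $\kappa(K_X+B) = 2 = \dim X - 1$, the Iitaka rational map of $K_X+B$ has a two-dimensional image and one-dimensional general fibers. The equality $\nu(K_X+B) = \kappa(K_X+B)$ combined with the nefness of $K_X+B$ forces $K_X+B$ to be numerically trivial on every curve contracted by this rational map. Applying Keel's theorem on base-point-freeness modulo algebraic spaces, one promotes the rational map to a genuine morphism $h \colon X \to Z$ onto a normal two-dimensional algebraic space $Z$, with $K_X+B$ numerically trivial over $Z$.

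For the second phase, choose an ample $\mathbb{Q}$-divisor $H$ on $X$ and a rational $\e > 0$ small enough so that $(X, B + \e H)$ remains klt. Since $K_X+B \equiv_{Z} 0$, the log divisor $K_X + B + \e H$ is pseudo-effective (indeed ample) over $Z$, so Theorem \ref{rel-mmp}(1) supplies a $\Q$-factorial relative minimal model $(X^+, B^+ + \e H^+)$ of $(X, B + \e H)$ over $Z$. Every extremal ray appearing in this MMP is $(K_X+B)$-trivial, so each step is either a $(K_X+B)$-flop or a divisorial contraction of a $(K_X+B)$-trivial $h$-exceptional divisor; consequently $(X^+, B^+)$ is again a $\Q$-factorial minimal model of $(X, B)$, and $h$ factors through a natural morphism $h^+ \colon X^+ \to Z$.

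The hard part is verifying equi-dimensionality of $h^+$. By Theorem \ref{rel-mmp}(3.1), because $B^+ + \e H^+$ is big over $Z$, the divisor $K_{X^+} + B^+ + \e H^+$ is semi-ample over $Z$. Any prime divisor $E^+ \subset X^+$ with $\dim h^+(E^+) \leq 1$ would carry a covering family of curves $C$ contracted by $h^+$ and satisfying $(K_{X^+} + B^+) \cdot C = 0$, hence $H^+ \cdot C \geq 0$; combined with the fact that $H^+$ is the strict transform of an ample divisor on $X$, this forces $E^+$ to have been contracted at some stage of the MMP, contradicting its existence on $X^+$. Thus no such $E^+$ survives, and $h^+$ is equi-dimensional. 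The main technical obstacle is running the relative MMP and contraction theorems over the algebraic space $Z$ in characteristic $p > 5$ and ensuring that every $h$-exceptional divisor with lower-dimensional image in $Z$ is indeed eliminated, which is supplied by the three-fold minimal model machinery of Theorem \ref{rel-mmp} together with Keel's framework over algebraic spaces.
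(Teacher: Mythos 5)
Your two--phase outline (Keel's theorem to get $h\colon X\to Z$, then a relative MMP over $Z$ to contract the divisors lying over points) matches the paper's strategy, and the first phase is fine. But the second phase has a fatal flaw in the choice of auxiliary divisor. You perturb by an \emph{ample} divisor $H$: since $K_X+B$ is numerically trivial over $Z$ and $H$ is ample, the divisor $K_X+B+\e H$ is already ample over $Z$, so $X$ is its own relative minimal model and the MMP you invoke via Theorem \ref{rel-mmp}(1) performs no contractions whatsoever. Consequently nothing is gained: a prime divisor $E$ with $h(E)$ a point survives untouched, and your closing argument --- that $H^+\cdot C\ge 0$ together with $H^+$ being the strict transform of an ample divisor ``forces $E^+$ to have been contracted at some stage'' --- has no content, because there were no stages. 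Indeed $H\cdot C>0$ for every curve $C$, which is exactly why an ample perturbation can never produce the negative extremal rays needed to contract $E$.

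The paper's fix is to perturb by the offending divisor itself: for a prime divisor $F$ with $h(F)\in T$ (a finite set of points identified via Kawamata's flattening trick), $F$ is \emph{not} nef, so one runs an LMMP for $K_X+B+\e F$ over $Z$. Every extremal ray encountered is $(K_X+B)$-trivial and $F$-negative, so the steps are $(K_X+B)$-trivial flips or divisorial contractions, the output is still a minimal model of $(X,B)$, and the program terminates precisely when $F$ has been contracted; repeating over the finitely many such $F$ yields the equi-dimensional $h^+\colon X^+\to Z$. If you replace your ample $H$ by such an $F$ (and add the termination argument showing the $(K_X+B+\e F)$-MMP must eventually perform the divisorial contraction killing $F$), your proof goes through.
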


\medskip

\emph{Sketch of the proof.}
Since $\kappa(K_X + B) = \nu(K_X + B) = 2$, by \cite[Lemma 7.2]{BW14} the divisor $K_X + B$ is endowed with a morphism $h: X \to Z$ to an algebraic space $Z$ of dimension two. Applying \cite[Proposition 2.1]{Ka85I} and flattening trick (see also \cite[Lemma 5.6]{BW14}), we get the following commutative diagram
$$ \xymatrix{ &X_1\ar[r]^{\phi}\ar[d]_{h_1} &X\ar[d]^h\\
&Z_1 \ar[r]^{\psi} & Z } $$
where $Z_1$ is a smooth projective surface, $X_1$ is a normal projective $3$-fold, $\phi, \psi$ are birational morphisms and $h_1: X_1 \to Z_1$ is an equi-dimensional fibration such that,
there exists a nef and big $\Q$-divisor $D_1$ on $Z_1$ satisfying that $\phi^*(K_X+B) \sim_{\Q} h_1^*D_1$.

Denote by $E(D_1)$ the exceptional locus of $D_1$, which equals to the union of $D_1$-numerically trivial curves. Let $T = \psi (E(D_1))$, which consists of finitely many closed points on $Z$. Then
$$U = Z \setminus T \cong Z_1 \setminus E(D_1)$$
is an algebraic variety, $X_U \cong (X_1)_U$ is equi-dimensional over $U$ and $K_{X_U} + B|_{X_U}$ is $\Q$-linearly trivial over $U$.

We contract all divisors on $X$ over $T$ by running a minimal model program over $Z$. The process is explained below, please refer to \cite{Wal15} for details.

Step 1: Let $F$ be a prime divisor on $X$ such that $h(F) \in T$. Then $F$ is not nef.
Fix a rational number $\epsilon >0$ such that $(X, B+ \epsilon F)$ is klt. Then we can run an LMMP for $K_X + B+ \epsilon F$ over $Z$. For the first step, the $K_X + B + \epsilon F$-extremal ray is $K_X + B$-numerically trivial. After a divisorial contraction or a $K_X + B + \epsilon F$-flip, we get $\mu_1: (X, B+ \epsilon F) \dashrightarrow (X_1^+, B_1^+ + \epsilon F_1^+)$. If $\mu_1$ is a divisorial contraction then $F_1^+ = 0$ and the LMMP terminates; otherwise, we find that $F_1^+$ is not nef and the $K_{X_1^+} + B_1^+ + \epsilon F_1^+$-extremal ray is $K_{X_1^+} +B_1^+$-numerically trivial. By induction, the log minimal model program ends up with a pair $(X_r^+, B_r^+)$ such that, $K_r^+ + B_r^+$ is nef, $F$ is contracted by the birational map $X \dashrightarrow X_r^+$ and $X_r^+ \dashrightarrow X$ has no exceptional divisors.

Step 2: If $X_r^+ \to Z$ is not equi-dimensional, then proceeding with the process in Step 1 on $(X_r^+, B_r^+)$, after finitely many steps we can get a pair $(X^+, B^+)$ satisfying all the conditions in the lemma.

\medskip

\begin{proof}[of Theorem \ref{s-amp-k=2}.]
We can pass to an uncountable field.
If the numerical dimension $\nu(K_X + B) = 3$, then the assertion follows from Theorem \ref{rel-mmp} (3.1).
So from now on, we assume $\nu(K_X + B) = 1$ or $2$, which means that $\kappa(X, K_X + B) =1$ or $2$ by the assumption. There exists a log smooth resolution $\mu: X'\to X$ of $(X, B)$ such that, the Iitaka fibration $g': X' \rightarrow Z'$ is a morphism. Let $B'= \mu_*^{-1}B + (1-\epsilon) E$, where $E$ is the sum of all $\mu$-exceptional divisors, and $0< \epsilon \ll 1$ is sufficiently small such that $(X,B)$ is a minimal model of $(X', B')$. Let $g'': (X'', B'') \to Z'$ be a minimal model of $(X', B')$ over $Z'$. Denote by $G''$ the generic fiber of $g''$. Then $\kappa(G'', K_{G''} + B''|_{G''}) = 0$, thus $K_{G''} + B''|_{G''} \sim_{\Q} 0$ by Theorem \ref{rel-mmp} (3.2).

If $\kappa(X, K_X + B) = 1$, then $K_{X''} + B''$ is $\Q$-trivial over $Z'$ by Theorem \ref{rel-mmp} (3.3). We can assume that $K_{X''} + B'' \sim_{\Q} g''^{*} H$ where $H$ is an ample $\Q$-divisor on $Z'$, thus $K_{X''} + B''$ is semi-ample. As $(X'', B'')$ is another minimal model of $(X', B')$, by a standard argument using the negativity lemma, the pullback of $K_{X} + B$ and $K_{X''} + B''$ coincide on any common resolution of $X$ and $X''$ (cf. \cite[Remark 2.7]{Bi12}). Therefore, $K_X+B$ is semi-ample.

We are left to consider the case $\kappa(X, K_X + B) = 2$. Then $\nu(K_X + B) = 2$. Let the notation be as in the proof of Lemma \ref{flatten}.
Recall that $X^+_U \cong X_U$ and that $K_{X_U} + B|_{X_U}$ is $\Q$-trivial over $U$.

Replacing $X$ with $X^+$, we can assume $h$ is equi-dimensional. Therefore, we can take a very ample divisor $S$ of $X$, which does not contain any component of $h^{-1}(T)$. We have the following commutative diagram
$$ \xymatrix@C=2cm{&S_1^{\nu} \ar[r]^>>>>>>>>>{\mathrm{normalization}}\ar[d]^{\phi_{S^{\nu}}}
&S_1 = \phi^{-1}S \ar[r]\ar[d]^{\phi_{S}} &X_1 \ar[r]^{h_1}\ar[d]^{\phi} & Z_1\ar[d]^{\psi}\\
&S^{\nu} \ar[r]^{\mathrm{normalization}}\ar[d]^{\sigma} &S \ar[r]&X\ar[r]^h & Z \\
&W\ar[rrru] &&&}.$$
where $W$ is introduced as follows.
The divisor $(K_X + B)|_{S^{\nu}}$ is nef and big. Consider the exceptional locus $E((K_X + B)|_{S^{\nu}})$, i.e., the union of finitely many $K_X + B$-numerically trivial curves on $S^{\nu}$. By the construction above, the image of $E((K_X + B)|_{S^{\nu}})$, via the natural map $S^{\nu} \to X$, is contained in finitely many fibers of $h$ over some closed points in $U$. So $(K_X + B)|_{E((K_X+B)|_{S^{\nu}})}$ is semi-ample, and by \cite[Theorem 1.9]{Ke99} $(K_X+B)|_{S^{\nu}}$ is semi-ample. For sufficiently divisible positive integer $n$, $|n(K_X+B)|_{S^{\nu}}|$ defines a birational morphism $S^{\nu} \to W$ to a normal projective variety $W$, and $(K_X+B)|_{S^{\nu}}$ descends to an ample divisor $(K_X+B)|_{W}$. And by the construction, the morphism $S^{\nu} \to Z$ factors through a finite morphism $W \to Z$.

First we will show that $Z$ is a projective variety. Since $W \setminus W_U$ consists of finitely many closed points, for $m \gg0$, we can take a Cartier divisor $D' \sim m(K_X+B)|_{W}$ contained in $W_U$. Let $D'_1 = \phi_{S^{\nu}}^* \sigma^*D$. Then $D_1'$ is supported in $S^{\nu}_U$ and $D_1'|_{E_{S_1^{\nu}/W}} \sim 0$ where $E_{S_1^{\nu}/W}$ denotes the exceptional locus of $S_1^{\nu} \to W$. Let $S_1^{\nu} \xrightarrow{\gamma} \bar{S}_1 \to Z_1$ be the Stein factorization. Obviously, $E_{S_1^{\nu}/\bar{S}_1} \subset E_{S_1^{\nu}/W}$, so $D_1'$ descends to a divisor $\bar{D}_1'$ on $\bar{S}_1$.
Taking the norm of $\bar{D}_1'$ over $Z_1$, we get a Cartier divisor $A'_1$ on $Z_1$ supported in $U$ (\cite[p.272 Remark 2.19 and p.274 ex. 2.6]{Liu10}). By the construction above, $A'_1$ is nef and big, $E(A'_1) = E(D_1)$ and $A_1'|_{E(A'_1)} \sim 0$. Thus $A_1'$ is a semi-ample divisor by \cite[Theorem 1.9]{Ke99}, to which the associated morphism coincides with $\psi: Z_1 \to Z$. Therefore, $Z$ is a projective variety.

Take $D \in |n(K_X+B)|$ for some sufficiently divisible integer $n>0$. Then there exists a Cartier divisor $A_U$ on $U$ such that $D_U = h^*A_U$. Let $A$ be the closure of $A_U$ in $Z$. Then $D= h^*A$. Taking the norm of $D|_W$ over $Z$, we get a Cartier divisor $dA$ where $d$ is the degree of the map $W \to Z$. Therefore, $A$ is a $\Q$-Cartier divisor. Finally by Nakai-Moishezon criterion, $A$ is an ample divisor on $Z$, which means that $K_X + B$ is semi-ample.
\end{proof}

\section{Separability of Albanese morphisms and Kodaira dimensions}\label{sec-ch-of-ab}
In this section, we first prove the following theorem, which is very useful to treat inseparable Albanese maps.
\begin{thm}\label{reduction}
Let $X$ be a smooth projective non-uniruled variety of dimension $n$. Denote by $a_X: X \to A_X$ the Albanese map. Assume that

(i) smooth resolution of singularities exists in dimension $n$;

(ii) $\dim a_X(X) \geq n-1$.

Then $\kappa(X) \geq 0$, and if the equality is attained then $a_X: X \to A_X$ is a separable surjective morphism.
\end{thm}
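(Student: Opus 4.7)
The plan is to induct on $n$ and split on the separability of $a_X$. In the separable case I use Stein factorization together with a ramification argument over subvarieties of abelian varieties and the subadditivity of Kodaira dimensions (Corollary \ref{Iit-cor} and its analogues). In the inseparable case I reduce to the separable one by iteratively quotienting by the foliation $\mathcal{F}=\mathcal{L}^{\perp}$ cut out by the image of $a_X^*\Omega^1_{A_X}$ in $\Omega^1_X$, with each step contributing non-negatively to the Kodaira dimension.

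\emph{Separable case.} Form the Stein factorization $X\xrightarrow{f} Y \to a_X(X) \subset A_X$, with $f$ a separable fibration and $Y\to a_X(X)$ finite. Hypothesis (ii) forces the generic fibre of $f$ to have dimension $0$ or $1$. If the fibre is $0$-dimensional, then $X$ is birational to $Y$ and the finite separable map $Y\to a_X(X)\subset A_X$ combined with $K_{A_X}\sim 0$ yields $K_Y\sim_{\Q}R$ with $R$ effective (the ramification), hence $\kappa(X)=\kappa(Y)\ge 0$. If the fibre is $1$-dimensional, non-uniruledness forces it to be a smooth curve of genus $\ge 1$, so $\kappa(\tilde X_{\ol\eta})\ge 0$; a smooth resolution $\tilde Y$ of $Y$ is non-uniruled with $\dim a_{\tilde Y}(\tilde Y)=n-1$, so by the inductive hypothesis $\kappa(\tilde Y)\ge 0$, and Corollary \ref{Iit-cor} (or its higher-$n$ analogue) gives
\[
\kappa(X)\;\ge\;\kappa(\tilde X_{\ol\eta})+\kappa(\tilde Y)\;\ge\;0.
\]
Further, if $\dim a_X(X)<\dim A_X$, then $a_X(X)$ is a proper subvariety generating $A_X$, so by Ueno's structure theorem for subvarieties of abelian varieties it cannot be a translate of a subgroup; hence $\kappa(a_X(X))\ge 1$, and pulling back, $\kappa(\tilde Y)\ge 1$, forcing $\kappa(X)\ge 1$.

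\emph{Inseparable case.} Let $\mathcal L\subset \Omega^1_X$ be the saturation of the image of $a_X^*\Omega^1_{A_X}\to \Omega^1_X$ and set $\mathcal F=\mathcal L^{\perp}\subset T_X$. Inseparability of $a_X$ forces $\mathcal F\ne 0$; involutivity and $p$-closedness of $\mathcal F$ follow from the closedness of the invariant $1$-forms on $A_X$ and compatibility of the construction with the Frobenius on the target. By Proposition \ref{flt}, $\pi:X\to X':=X/\mathcal F$ is purely inseparable of degree $p^{\mathrm{rank}\,\mathcal F}$ and on the smooth locus of $X'$
\[
K_X\sim \pi^{*}K_{X'}+(p-1)\det\mathcal F.
\]
Choose a smooth resolution $\mu:\tilde X'\to X'$ (hypothesis (i)). The factorization $\tilde X'\to A_X$ shows that $\tilde X'$ is non-uniruled with $\dim a_{\tilde X'}(\tilde X')\ge n-1$, and the analogous $\mathcal L'\subset\Omega^1_{\tilde X'}$ has strictly larger generic rank than $\mathcal L$. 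Iterating terminates in a model whose Albanese is separable; the previous paragraph then gives $\kappa\ge 0$ for the terminal model, and the pseudo-effective contributions $(p-1)\det\mathcal F$ along the chain propagate to $\kappa(X)\ge 0$. Whenever any intermediate $\mathcal F$ is nontrivial the contribution is strictly positive, so $\kappa(X)=0$ forces every $\mathcal F$ to vanish and hence $a_X$ separable; combined with the separable dichotomy, $\kappa(X)=0$ also forces $a_X$ surjective.

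\emph{Main obstacle.} The central difficulty lies in the inseparable iteration: verifying pseudo-effectivity and the strict-positivity gain of $(p-1)\det\mathcal F$ at each step, ensuring that the smooth resolutions preserve both non-uniruledness and the bound $\dim a(\tilde X')\ge\dim\tilde X'-1$, and identifying a strictly decreasing invariant (e.g.\ $\dim a_X(X)-\mathrm{rank}\,\mathcal L$) to guarantee termination in finitely many steps. A secondary difficulty is that Corollary \ref{Iit-cor} is stated only for $\dim X=3$: in higher $n$ one must invoke the appropriate positive-characteristic Iitaka-type subadditivity for fibrations to non-uniruled bases, together with a positive-characteristic version of Ueno's structure theorem for the surjectivity assertion.
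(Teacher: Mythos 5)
Your overall architecture (split on separability, quotient by the foliation $\mathcal{F}=\mathcal{L}^{\perp}$ in the inseparable case, subadditivity for the fibration case) matches the paper's, but there is a genuine gap at the one point where the argument lives or dies: the positivity of the correction term in the inseparable step. You assert that the contributions $(p-1)\det\mathcal{F}$ are pseudo-effective, and strictly positive whenever $\mathcal{F}\neq 0$, and you yourself flag this as the main obstacle without resolving it. There is no reason for the determinant of a foliation $\mathcal{F}\subset T_X$ on a non-uniruled variety to be pseudo-effective --- positivity of foliation determinants is typically tied to uniruledness, the opposite of your hypothesis --- so as written the formula $K_X\sim\pi^*K_{X'}+(p-1)\det\mathcal{F}$ pushes positivity in the wrong direction. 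The missing idea is the paper's key computation: the exact sequence $0\to\mathcal{L}\to\Omega^1_X\to\mathcal{F}^*\to 0$ gives $\det\mathcal{F}\sim\det\mathcal{L}-K_X$ on the locus where the quotient is smooth, hence $pK_X\sim\pi^*K_{X/\mathcal{F}}+(p-1)\det\mathcal{L}$ there; the positivity now comes from $\mathcal{L}$, which is generically globally generated with $h^0(X,\mathcal{L})\geq h^0(A_X,\Omega^1_{A_X})$ by Igusa's theorem, while inseparability forces $\rank\mathcal{L}$ to be strictly smaller than that, so Lemma \ref{rk-to-h0} yields $h^0(X,(\det\mathcal{L})^{**})\geq 2$ and in fact $\kappa(X)>0$ once $\kappa(X/\mathcal{F})\geq 0$ is known. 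The paper then inducts on $\deg a_X$ (which drops by a factor $p^{\rank\mathcal{F}}$ at each quotient), not on the rank of $\mathcal{L}$.

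Two further gaps. First, termination of your iteration: you do not prove that the generic rank of $\mathcal{L}'$ strictly increases, and in the fibration case $\dim a_X(X)=n-1$ it need not; the paper has to interpose the normalization $X'$ of the reduction of $X^{(1)}\times_{Z^{(1)}}Z$ between $X$ and $X^{(1)}$ and show that at each step either the degree of the Albanese map of the base of the Stein factorization drops or the multiplicity of the geometric generic fiber drops. That two-case analysis is the technical heart of this half of the proof and is absent from your sketch. Second, for the surjectivity assertion when $\kappa(X)=0$ you invoke Ueno's structure theorem for subvarieties of abelian varieties, which is delicate in characteristic $p$ and unnecessary here: if $a_X$ is separable, generically finite onto its image, and not surjective, then $\Omega^1_X$ is generically globally generated with $h^0(X,\Omega^1_X)\geq\dim A_X>n=\rank\Omega^1_X$, so Lemma \ref{rk-to-h0} already gives $h^0(X,K_X)\geq 2$, contradicting $\kappa(X)=0$. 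Your separable fibration step is essentially the paper's, though note that Corollary \ref{Iit-cor} as stated concerns $3$-folds; for general $n$ one must invoke the relative-dimension-one subadditivity of \cite{CZ15}, as the paper implicitly does.
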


\medskip

Before the proof, we recall an easy result as a preparation.
\begin{lem}\label{rk-to-h0}
Let $X$ be a smooth projective variety and $V$ a torsion free coherent sheaf on $X$. Assume that $V$ is generically globally generated and $h^0(X, V) >  \rank ~V$. Then
$$h^0(X, (\det V)^{**}) \geq 2$$
where $(\det V)^{**}$ denotes the double dual of $\det V$.
\end{lem}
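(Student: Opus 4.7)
The plan is to construct two linearly independent global sections of $(\det V)^{**}$ by taking wedge products of carefully chosen global sections of $V$. Let $r = \rank V$. Since $X$ is smooth, the rank-one reflexive sheaf $(\det V)^{**}$ is a line bundle $L$, and the natural map $\wedge^r V \to L$ is an injection (it is an isomorphism on the locus where $V$ is locally free, which is a big open subset). So it suffices to produce two independent global sections of $\wedge^r V$, viewed as sections of $L$.

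By hypothesis we can pick $k$-linearly independent global sections $s_1, \ldots, s_{r+1} \in H^0(X, V)$. Because $V$ is generically globally generated, the stalk $V_\eta$ at the generic point is spanned by $H^0(X, V)$; after relabeling we may therefore assume that $s_1, \ldots, s_r$ form a $K(X)$-basis of $V_\eta$. Then
$$\alpha := s_1 \wedge s_2 \wedge \cdots \wedge s_r$$
is a nonzero section of $\wedge^r V$, hence a nonzero global section of $L$. Write
$$s_{r+1} = c_1 s_1 + c_2 s_2 + \cdots + c_r s_r, \qquad c_i \in K(X).$$
Since $s_1, \ldots, s_{r+1}$ are $k$-linearly independent in $H^0(X, V)$, the tuple $(c_1, \ldots, c_r)$ is not a tuple of elements of $k$; after relabeling we may suppose $c_1 \in K(X) \setminus k$.

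Now form the section
$$\beta := s_{r+1} \wedge s_2 \wedge \cdots \wedge s_r \in H^0(X, \wedge^r V) \subset H^0(X, L),$$
where the containment uses that $\wedge^r V \hookrightarrow L$. Expanding via the expression for $s_{r+1}$ and using alternation kills every term except the one with $s_1$, giving $\beta = c_1 \cdot \alpha$ as rational sections of $L$. If $\beta$ and $\alpha$ were proportional in $H^0(X, L)$, their ratio $c_1$ would be a constant because $X$ is projective, contradicting $c_1 \notin k$. Therefore $\alpha$ and $\beta$ are $k$-linearly independent in $H^0(X, L) = H^0(X, (\det V)^{**})$, and $h^0(X, (\det V)^{**}) \geq 2$ as desired.

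The only conceptual point to be careful about is ensuring that the wedge products actually live in $H^0(X, (\det V)^{**})$ rather than just in $H^0$ of a subsheaf with smaller support: this is handled by the fact that on the smooth variety $X$ any rank-one torsion free quotient of a tensor construction embeds into its reflexive hull, which is the line bundle $L$. Everything else is formal manipulation.
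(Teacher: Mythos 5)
Your argument is correct in outline but takes a genuinely different route from the paper. The paper proves the lemma by induction on $\rank V$: it takes the saturation $W$ of the subsheaf generated by $r-1$ of the sections, notes that $W$ and $V/W$ are both generically globally generated with $h^0(W)+h^0(V/W) > \rank W + \rank(V/W)$, and concludes via $\det V \sim \det W \otimes \det(V/W)$. You instead argue directly: wedge $r$ sections forming a generic basis to get one section $\alpha$ of $(\det V)^{**}$, then swap in an extra section to get $\beta = c_1\alpha$ with $c_1$ a non-constant rational function, so $\alpha,\beta$ are independent. Your route is arguably cleaner (no induction, no saturation), and the key observation --- that two global sections of a line bundle on a projective variety whose ratio is a non-constant rational function are linearly independent --- is exactly the right engine.

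There is, however, one quantifier-order slip you must repair. You first choose \emph{arbitrary} $k$-linearly independent sections $s_1,\dots,s_{r+1}$ and then claim that ``after relabeling'' some $r$ of them form a $K(X)$-basis of $V_\eta$. Generic global generation guarantees that \emph{all} of $H^0(X,V)$ spans $V_\eta$, not that any prechosen $(r+1)$-tuple of independent sections does: on $\mathbb{P}^1$ take $V=\mathcal{O}(2)^{\oplus 2}$ (so $r=2$, $h^0=6$) and the independent sections $(x^2,0),(xy,0),(y^2,0)$, which span only a line in $V_\eta$. The fix is just to reverse the order of choices: first pick $s_1,\dots,s_r\in H^0(X,V)$ whose images form a $K(X)$-basis of $V_\eta$ (these are automatically $k$-independent), then use $h^0(X,V)>r$ to pick $s_{r+1}$ outside their $k$-span. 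With that reordering, the rest of your argument --- including the harmless imprecision about $\wedge^r V\to(\det V)^{**}$, whose kernel is at worst torsion and therefore kills no generically nonzero section --- goes through.
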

\begin{proof}
To compute $h^0(X, (\det V)^{**})$, we can work in codimension one. So we may shrink $X$ to assume that $V$ is locally free.

The case $\rank~ V = 1$  is trivial. We do induction and assume that the assertion is true for vector bundles satisfying our conditions and of rank smaller than $\rank~ V$. Take a basis $s_1, s_2, \cdots, s_k$ of $H^0(X, V)$. Let $W$ be the saturation of the sub-sheaf generated by $s_1, s_2, \cdots, s_{r-1}$. Then both $W$ and $V/W$ are generically globally generated, which can be assumed locally free by shrinking $X$ again. Then
$$h^0(X, \det W) \geq 1~\mathrm{and}~h^0(X, \det V/W) \geq 1,$$
and by the induction of ranks, at least one of the strict inequalities above is attained since
$$h^0(X, W) + h^0(X, V/W) \geq h^0(X, V) >  \rank ~V = \rank~ W + \rank ~V/W.$$
So the assertion follows easily by the relation $\det V \sim \det W \otimes \det V/W$.
\end{proof}

\begin{proof}[of Theorem \ref{reduction}]
We fall into two cases $\dim a_X(X) = n$ and $\dim a_X(X) = n-1$.

\medskip

\textbf{Case $\dim a_X(X) = n$.} In this case, if $a_X$ is separable then $\Omega_X^1$ is generically globally generated, thus $\kappa(X) \geq 0$. If moreover $\kappa(X) = 0$, then $a_X$ is surjective because otherwise, we will have $h^0(X, \Omega_X^1) > n = \rank ~\Omega_X^1$, which implies $h^0(X, \Omega_X^n) \geq 2$ by Lemma \ref{rk-to-h0}.

Let's consider the case that $a_X$ is inseparable. We will use the argument of \cite[ Prop. 4.3]{Ek87}, and do induction on $\deg a_X$. Assume that a variety $Y$ with Albanese map of degree $\leq \deg a_X -1$ has Kodaira dimension
$\kappa (Y) \geq 0$, and if the Albanese map is inseparable then $\kappa(Y) >0$.

Let $\mathcal{L}$ denote the saturation of the image of the natural homomorphism $a_X^*\Omega_{A_X}^1 \rightarrow \Omega_{X}^1$. By Igusa's result \cite[Theorem 4]{Se58}, $h^0(X, \mathcal{L})\geq h^0(A_X, \Omega_{A_X}^1) \geq n$. Since $\mathcal{L}$ is generically globally generated and $\mathrm{rank}~\mathcal{L} \leq n-1$ (Proposition \ref{prop-of-sep}), by Lemma \ref{rk-to-h0} we have
$$h^0(X, (\det \mathcal{L})^{**}) \geq 2.$$

We get a natural foliation $\mathcal{F} = \mathcal{L}^{\bot} \subset T_X$. Denote $Y = X/\mathcal{F}$. Then there is a natural morphism $a_Y: Y \to A_X$ fitting into the following commutative diagram
$$\centerline{\xymatrix{
&X \ar[d]_{a_X}\ar[r]^{\pi} &Y \ar[ld]^{a_Y}  \\
&A_X &
}}$$
Denote by $Y_0$ the smooth part of $Y$, and $X_0 = \pi^{-1}Y_0$. Then $\mathrm{codim}_X (X \setminus X_0) \geq 2$, $\mathcal{F}|_{X_0}$ is a smooth foliation on $X_0$, and by proposition \ref{flt}
$$(*) \hspace{1cm} K_{X_0} \sim \pi^*K_{Y_0} + (p-1)\det \mathcal{F}|_{X_0}.$$
On the other hand, we have the following exact sequence
$$0 \rightarrow  \mathcal{L}|_{X_0} \rightarrow \Omega_{X_0}^1 \rightarrow \mathcal{F}^*|_{X_0} \rightarrow 0,$$
which gives
$$\det\mathcal{F}|_{X_0} \sim \det \mathcal{L}|_{X_0} - K_{X_0}.$$
Combining $(*)$, we get
$$K_{X_0} \sim_{\mathbb{Q}} \frac{1}{p}(\pi^*K_{Y_0} + (p-1)\det \mathcal{L}|_{X_0}).$$
By $h^0(X_0, \det \mathcal{L}|_{X_0}) \geq 2$ and the induction that $\kappa(Y_0) \geq 0$, we show that $\kappa(X) >0$.

\medskip

\textbf{Case $\dim a_X(X) = n - 1$.} Let $a_X = a_Z \circ f: X \to Z \to A=A_X$ be the Stein factorization of $a_X$. Then $\kappa(Z) \geq 0$ by the previous case.

If $a_X$ is a separable morphism, then so is $f$. Since $X$ is non-uniruled, applying Corollary \ref{Iit-cor} to $f: X \to Z$, we conclude that $\kappa(X) \geq 0$, and the equality is attained only when $\kappa(Z) = 0$ and thus $a_X$ is surjective by the result of the previous case.

We are left to consider the case that $a_X$ is inseparable. Let $\mathcal{L}$ denote the saturation of the image of the natural homomorphism $a_X^*\Omega_{A_X}^1 \rightarrow \Omega_{X}^1$.
Then $\mathcal{L}$ is generically globally generated, $\mathrm{rank}~\mathcal{L} \leq n-2$ by Proposition \ref{prop-of-sep}, and $h^0(X, \mathcal{L})\geq h^0(A_X, \Omega_{A_X}^1) \geq n-1$ by Igusa's result again, which implies $h^0(X, (\det \mathcal{L})^{**}) \geq 2$ by Lemma \ref{rk-to-h0}.
We get a natural foliation $\mathcal{F} = \mathcal{L}^{\bot} \subset T_X$ of rank $\geq 2$, and a quotient map $\rho: X \rightarrow X_1 = X/\mathcal{F}$, which is a factor of $a_X$.
If $W_1$ is a smooth resolution of $X_1$, then as in the previous case we have that
\begin{itemize}
\item[$(\clubsuit)$]
$\kappa(X) \geq \kappa(W_1)$, and if $\kappa(W_1) = 0$ then $\kappa(X) \geq 1$.
\end{itemize}

Let $X'$ be the normalization of the reduction of $X^{(1)} \times_{Z^{(1)}} Z$. Then $X'$ is between $X$ and $X^{(1)}$, i.e., $F_X$ has the factorization
\begin{align*}
F_X: X \xrightarrow{\phi} X' \xrightarrow{\pi'} X^{(1)}.
\end{align*}
We claim that the natural morphism $\pi': X' \to X^{(1)}$ factors through a morphism $\pi_1: X'\to X_1$. Indeed, by Proposition \ref{flt} we can assume $X'= X/\mathcal{F}'$ for some foliation $\mathcal{F}'$ on $X'$, and we only need to show $\mathcal{F}'\subset \mathcal{F}$, which is equivalent to that $\mathcal{F}^{\bot} \subset \mathcal{F}'^{\bot}$. Note that $\mathcal{F}'^{\bot}$ and $\mathcal{F}^{\bot}$ coincide with the
saturation of the image of the natural maps $\phi^*\Omega_{X'} \to \Omega_X$ and $a_X^*\Omega_{A_X}^1 \rightarrow \Omega_{X}^1$ respectively. Then the claim follows from the fact that
$a_X$ factors through $\phi: X \to X'$.

We fit the above varieties into the following commutative diagram
$$\centerline{\xymatrix{
&X \ar[d]_{\phi}\ar[rd]_{\rho}\ar[rrd]^{F_X} & &\\
&X'\ar[r]_{\pi_1}\ar[d]_{f'} &X_1 \ar[r]_{\pi_2} \ar[d]_{f_1}  &X^{(1)}\ar[d]_{f^{(1)}} \\
&Z \ar[r]_{\pi_3}\ar[d]_{a_Z}  &Z_1 \ar[ld]^{a_{Z_1}}\ar[r]_{\pi_4}   &Z^{(1)}\ar[d]^{a_{Z^{(1)}}} \\
&A\ar[rr]^{F_A} &  &A^{(1)}
}}$$
where $f_1: X_1 \to Z_1$ is the fibration arising from the Stein factorization of the natural morphism $X_1 \to A$, and $\pi_i, \rho, f'$ denote the natural maps. Note that $f'$ is a fibraion by
$$\mathcal{O}_Z \subseteq f'_*\O_{X'} \subseteq f'_*\phi_*\O_{X} = f_*\O_{X} = \O_Z.$$

We fall into two cases.

Case I: $\deg \pi_3 > 1$. Then $\deg a_{Z_1} < \deg a_{Z}$.

Case II: $\deg \pi_3 = 1$, i.e., $Z_1 = Z$.

We claim that in case II,
$$(\spadesuit)~~~\mathrm{mult} (X'_{\ol\eta}) < \mathrm{mult} (X_{\ol\eta}).$$
Indeed, in Case II, by the universal property of fiber product, there is a natural dominant morphism $X_1 \to X^{(1)} \times_{Z^{(1)}} Z$, which implies that $\pi_2$ factors through a morphism $X_1 \to X'$. So $\pi_1: X' \to X_1$ is an isomorphism.
By $\rank~\mathcal{F} \geq 2$, we have $\deg \pi_2 =\frac{p^n}{\deg \rho}\leq p^{n-2} < \deg \pi_4$, which implies that $X^{(1)} \times_{Z^{(1)}} Z$ is not reduced.
Finally, comparing multiplicities of the geometric generic fibers of $f': X' \to Z$ and $f: X \to Z$, we can show the inequality $(\spadesuit)$.

We can run a program, beginning with the fibration $f_0 = f: W_0 =X \to Z_0 = Z$. Assume we have defined $W_{n-1}, Z_{n-1}$ and the fibration $f_{n-1}: W_{n-1} \to Z_{n-1}$. If $a_{W_{n-1}}: W_{n-1} \to A$ is inseparable, we can go the process above,
\begin{itemize}
\item
if in Case I, let $W_n$ be a smooth resolution of $(X_{n-1})_1$, and let $Z_n = (Z_{n-1})_1$;
\item
if in Case II, let $W_n$ be a smooth resolution of $(X_{n-1})'$, and let $Z_n = Z_{n-1}$.
\end{itemize}
We will end this program when $f_n: W_n \to Z_n$ is separable, equivalently the geometric generic fiber of $f_n$ is reduced by Proposition \ref{prop-of-sep}.

This program will terminate. Indeed, when running this program, we will fall into Case I for finitely many times since $\deg a_{Z_i}$ will be stable. So after finitely many steps, we always fall into Case II. But then since $\mathrm{mult}(W_i)_{\ol\eta_i} < \mathrm{mult}(W_{i -1})_{\ol\eta_{i-1}}$, after finitely many steps, we obtain a fibration $f_n: W_n \to Z_n$ having reduced geometric generic fiber, and the program terminates.

Applying Corollary \ref{Iit-cor}, we have $\kappa(W_n) \geq 0$.
Finally applying $(\clubsuit)$, by induction we can show that $\kappa(X) > 0$.
\end{proof}

As an easy application, we can characterize abelian varieties birationally by the conditions below. Note that it is expected that a smooth projective variety, with zero Kodaira dimension and maximal Albanese dimension, is birational to an abelian variety, which is finally proved in \cite{HPZ17} by much more technical arguments later.
\begin{thm}\label{ch-of-ab-kx}
Let $X$ be a smooth projective variety of maximal Albanese dimension of dimension $n$. Assume smooth resolution of singularities exists in dimension $\leq n$.
Then $\kappa(X) \geq 0$.

Moreover if $\kappa(X) = 0$ and either

(a) the Albanese map $a_X: X \to A_X$ factors into $a_X = a_{X'} \circ \sigma: X\dashrightarrow X' \to A_X$ where $\sigma: X \dashrightarrow X'$ is a birational map to  a minimal model $X'$ of $X$ with at most klt singularities, and $K_{X'} \sim_{\Q} 0$, or

(b) $\mathrm{char}~k = p>5$, $\dim X = 3$ and $A_X$ is simple, i.e., $A_X$ contains no non-trivial abelian varieties (\cite[IV. 19]{Mum74}), \\
then $X$ is birationally equivalent to an abelian variety.
\end{thm}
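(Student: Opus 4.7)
The bound $\kappa(X)\ge 0$ is immediate from Theorem~\ref{reduction}: maximal Albanese dimension gives $\dim a_X(X)=n\ge n-1$, and in the equality case $\kappa(X)=0$ the same theorem forces $a_X$ to be separable and surjective. In particular $\dim A_X=n$ and $a_X$ is generically finite.

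In case (a), since $\sigma:X\dashrightarrow X'$ is birational, the morphism $a_{X'}:X'\to A_X$ inherits separability, surjectivity, and generic finiteness. Working at codimension-one points on the smooth locus of the klt (hence normal) variety $X'$ and using $K_{A_X}=0$, separability yields the relation $K_{X'}\sim_{\Q} R$ for some effective Weil $\Q$-divisor $R$ (the ramification divisor, defined on the smooth locus and extended to $X'$). The hypothesis $K_{X'}\sim_{\Q} 0$ forces $R=0$, so $a_{X'}$ is étale in codimension one. Form the Stein factorization $a_{X'}=\pi\circ\phi:X'\to Y\to A_X$: the morphism $\phi$ is birational (generic finiteness with connected fibres), $Y$ is normal, and $\pi:Y\to A_X$ is finite and still étale in codimension one. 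Zariski-Nagata purity of the branch locus (applied to the finite map $\pi$ from normal $Y$ to smooth $A_X$) upgrades this to $\pi$ being étale, and a connected étale cover of an abelian variety is again abelian. Hence $Y$, and therefore $X'$ and $X$, are birational to an abelian variety.

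For case (b), Theorem~\ref{reduction} again gives that $a_X$ is separable, surjective, and generically finite, so $\dim A_X=3$. I form the Stein factorization $a_X=\pi\circ\phi:X\to Y\to A_X$ with $\phi$ birational and $\pi$ finite of some degree $d$, and wish to prove $d=1$; the final step of case (a) then applies verbatim. Suppose $d\ge 2$: if $\pi$ is étale in codimension one, purity concludes as before, so assume $\pi$ has a non-empty branch divisor $B$ on $A_X$. Here the simplicity of $A_X$ enters critically through the following standard fact, which I would establish explicitly: a non-zero effective divisor on a simple abelian variety is ample. One analyses the identity component of $\ker\phi_B$, an abelian subvariety of $A_X$, which is therefore $0$ or $A_X$; in the former case $B$ is ample by the Riemann-Roch criterion, and in the latter $\O(B)\in\Pic^0(A_X)$ so $B\sim 0$ and hence $B=0$, contradicting $B\ne 0$. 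So $B$ is ample.

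\textbf{Main obstacle.} It remains to derive a contradiction from ampleness of $B$ together with $\kappa(X)=0$. Passing to a smooth resolution $\sigma:\tilde Y\to Y$ with composition $\tilde\pi:\tilde Y\to A_X$, the ramification formula yields $K_{\tilde Y}\sim R_{\tilde\pi}$ effective, and birational invariance gives $\kappa(\tilde Y, R_{\tilde\pi})=\kappa(X)=0$. The plan is to show $R_{\tilde\pi}$ is in fact big by comparing it with the nef-big divisor $\tilde\pi^*B_{\mathrm{red}}$: in the tame fully-ramified case one has the clean identity $R_{\tilde\pi}\sim_{\Q}\tfrac{e-1}{e}\,\tilde\pi^*B_{\mathrm{red}}$ on the ramification locus, and in the wild case ($p\mid e_{ij}$) the different contributes more, so the inequality only strengthens. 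The delicate bookkeeping, which I expect to be the main obstacle, lies in handling (i) partially-ramified branches where some $e_{ij}=1$, (ii) possibly non-reduced $B$, and (iii) the exceptional divisors of the resolution $\sigma$ which can mix with the ramification contribution; making precise an inequality of the form $R_{\tilde\pi}\ge c\,\tilde\pi^*B_{\mathrm{red}} - D$ for some $c>0$ and a fixed effective $D$ (which would force $\kappa(\tilde Y, R_{\tilde\pi})\ge 1$) is where the most care is required.
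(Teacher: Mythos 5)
The first assertion and case (a) of your proposal follow the paper's proof essentially verbatim: Theorem~\ref{reduction} gives $\kappa(X)\ge 0$ and separability/surjectivity of $a_X$ in the equality case, and in case (a) the ramification formula plus $K_{X'}\sim_{\Q}0$ gives \'etaleness in codimension one, after which Stein factorization and Zariski--Nagata purity finish. That part is correct.

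Case (b), however, has a genuine gap, and it is exactly at the point you flag as the ``main obstacle.'' Your reduction (Stein factorize, note that a nonzero effective divisor on a simple abelian variety is ample, and try to contradict $\kappa(X)=0$ when the branch divisor $B$ is nonzero) is reasonable, but the inequality you hope to establish, $R_{\tilde\pi}\ge c\,\tilde\pi^*B_{\mathrm{red}}-D$, is not merely delicate --- it is false in general: over a component of $B$ the cover $\tilde\pi$ may be ramified along only some of the preimage components, and the unramified components appear in $\tilde\pi^*B_{\mathrm{red}}$ with coefficient $1$ but in $R_{\tilde\pi}$ with coefficient $0$, so no $c>0$ works. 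Moreover, even granting such an inequality, a lower bound of the form ``big minus effective'' does not control $\kappa(\tilde Y,R_{\tilde\pi})$ from below, so the contradiction would not follow. The paper closes this case by a different mechanism. It first passes to a relative minimal model $a_{X'}:X'\to A_X$ over $A_X$ (automatically minimal since $A_X$ contains no rational curves), so that $K_{X'}\sim_{\Q}R$ is nef. If every component of $R$ maps to dimension $<2$, one is in the \'etale-in-codimension-one situation and purity applies. Otherwise one takes a component $T$ of $R$ with $\dim a_{X'}(T)=2$, applies adjunction to get $K_{T^{\nu}}+\Delta\sim_{\Q}(1+\tfrac1n)K_{X'}|_{T^{\nu}}$, and shows a smooth model $S$ of $T^{\nu}$ is of general type: $S$ has maximal Albanese dimension, and the alternatives $\kappa(S)=0$ (abelian surface) or $\kappa(S)=1$ (elliptic Iitaka fibration) would each produce a nontrivial abelian subvariety of the simple $A_X$. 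Bigness and nefness of $K_{T^{\nu}}+\Delta$ then give $(K_{T^{\nu}}+\Delta)^2>0$, hence $K_{X'}^3>0$ by an intersection computation using nefness of $K_{X'}$ and effectivity of $R$, contradicting $\kappa(X)=0$. This adjunction-plus-classification step is the idea your proposal is missing, and I do not see how to complete your route without some substitute for it.
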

\begin{proof}
The first assertion follows from Theorem \ref{reduction}.

For the remaining assertions, we assume that $\kappa(X) = 0$. Then applying Theorem \ref{reduction} shows that $a_{X}$ is a separable surjective morphism.

In case (a), since $K_{X'} \sim_{\Q} 0$, by ramification formula we have that $a_{X'}$ is \'{e}tale in codimension one. Let $a_{X'} = a_{X''} \circ \sigma': X'\to X'' \to A_X$ be the Stein factorization where $\sigma'$ is a birational morphism. By the purity of branch locus theorem \cite[X, Theorem 3.1]{SGA1}, we see that $a_{X''}: X'' \to A$ is \'{e}tale, hence $X''$ is an abelian variety by \cite[Sec. IV 8 Theorem]{Mum74}.

In case (b), applying Theorem \ref{rel-mmp} (1) we have a relatively minimal model $a_{X'}: X'\to A_X$ over $A_X$, and $X'$ is minimal by Theorem \ref{rel-mmp} (3.4) since $A_X$ contains no rational curves. Denote by $R \subset X'$ the divisor corresponding to the pull-back of a nonzero global section of $\omega_A$ via $a_{X'}$.
Then $R \sim_{\Q} K_{X'}$ is nef. If $\dim a_{X'}(R) < 2$, then similarly as in case (a) we can show the theorem by \cite[X, Theorem 3.1]{SGA1}. From now on, assume that $\dim a_{X'}(R) = 2$. Take a component $T$ of $R$ such that $\dim a_{X'}(T) = 2$, and write that $R = nT + T'$ where $T$ is not a component of $T'$. Denote by $T^{\nu}$ the normalization of $T$. Then by adjunction \cite[5.3]{Ke99}, there exists an effective divisor $\Delta$ on $T^{\nu}$ such that
$$K_{T^{\nu}} + \Delta \sim_{\Q} (K_X + \frac{R}{n})|_{T^{\nu}} \sim_{\Q} (1+ \frac{1}{n})K_{X'}|_{T^{\nu}}.$$
Let $\mu: S \to T^{\nu}$ be a smooth resolution. Then we can write that
$$K_S + B_1 = \mu^*(K_{T^{\nu}}+ \Delta) + E$$
where $E, B_1$ are effective divisors on $S$ having no common components, so $E$ is $\mu$-exceptional.

We claim that $S$ is of general type. Indeed, since $S$ is of maximal Albanese dimension, if $\kappa(S) =0$ then $S$ is  birational to an abelian surface (\cite[Sec. 10]{Ba01}); and if
$\kappa(S) =1$ then the Iitaka fibration of $S$ is an elliptic fibration (\cite[Theorem 9.9]{Ba01}). However, neither of the cases above happen since $A_X$ is simple.

By Theorem \ref{ct}, we conclude that $\kappa(T^{\nu}, K_{T^{\nu}} + \Delta)\geq 2$, i.e.,
$K_{T^{\nu}} + \Delta$ is nef and big. Then by the relation (cf. \cite[Sec. 1.2]{Deb01})
$$(1+ \frac{1}{n})^2 \cdot \frac{1}{n} K_{X'}^3 = ((1+ \frac{1}{n})K_{X'})^2 \cdot \frac{R}{n} \geq ((1+ \frac{1}{n})K_{X'})^2 \cdot T =   (K_{T^{\nu}} + \Delta)^2 >0$$
we see that $K_{X'}$ is big. However this contradicts $\kappa(X) = 0$.
\end{proof}

\section{Proof of the Theorem \ref{abundance}}\label{sec-pf-abd}
We will prove Theorem \ref{abundance} in this section.
Applying Theorem \ref{s-amp-k=2}, we only need to show that either $\kappa(X, K_X + B) \geq 1$ or $K_X + B\sim_{\Q} 0$. We argue case by case according to the Albanese dimension of $X$.

If $\dim a_X(X) = 2, 3$ then applying Theorem \ref{reduction}, otherwise applying Corollary \ref{Iit-cor} to the fibration $f: X \to Y$ arising from the Stein factorization of $a_X$, we can show that $\kappa(X) \geq 0$. Since $B\geq 0$, in the following we only need to consider the case $\kappa(X) = 0$.

\subsection{The case $\dim a_X(X) = 1$}\label{albdim=1}
In this case, since $X$ is not uniruled and $g(Y) \geq 1$, applying Corollary \ref{Iit-cor}, the condition $\kappa(X) = 0$ implies that $g(Y) = 1$ and a smooth model $\tilde{X}_{\bar{\eta}}$ of geometric generic fiber has $\kappa(\tilde{X}_{\bar{\eta}}) = 0$. From Proposition \ref{prop-of-sep} and Remark \ref{bc-smthing}, we know that $f$ is separable, and there exists some $e$-th absolute Frobenius iteration $F_Y^e: Y'= Y^e \to Y$ such that, for a smooth resolution $X'$ of $X\times_Y Y'$, the geometric generic fiber $X'_{\ol\eta}$ of $f': X'\to Y'$ is a smooth surface with $\kappa(X'_{\bar{\eta}}) = 0$. Let $W$ be a relative minimal terminal model of $X'$ over $Y'$, and assume the birational map $\sigma: X'\to W$ is a morphism.
We fit the varieties above into the following commutative diagram
$$\xymatrix@C=2cm{
&X' \ar[rrd]^{\pi'}\ar[d]^{\sigma} \ar[rdd]|{f'}\ar[rd] &  &\\
&W\ar[rd]|g &X \times_Y Y'\ar[r]^<<<<<<{}\ar[d]    &X\ar[d]^f \\
& &Y'=Y^e\ar[r]^{F_Y^e}   &Y
}$$
where $g,  \pi'$ denote the natural morphisms.

Applying Proposition \ref{compds}, since $K_Y \sim 0$ there exist effective $\pi'$-exceptional divisors $E', E''$ and effective divisors $B', B''$ on $X'$ such that
$$K_{X'} + B' \sim_{\mathbb{Q}} \pi'^*(K_X +B) + E'~\mathrm{and}~K_{X'} + B'' \sim_{\mathbb{Q}} \pi'^*K_X + E''.$$
Applying Theorem \ref{ct} we have
$$\kappa(X', K_{X'}) \leq \kappa(X', \pi'^*K_X + E'') = \kappa(X, K_X) = 0,$$
in turn by Corollary \ref{Iit-cor} we obtain that $\kappa(X') = \kappa(W) = 0$.
Applying Theorem \ref{rel-mmp} (3.3, 3.4), we conclude $K_W \sim_{\Q} 0$.
Since $W$ is terminal, we can assume $K_{X'} \sim E$ where $E$ is an effective $\sigma$-exceptional divisor and contains all $\sigma$-exceptional divisorial components.
Then by Theorem \ref{ct}, for every integer $n>0$
$$\kappa(X, K_X + B) = \kappa(X', K_{X'} + B') = \kappa(X', nE + B').$$
Let $H = \sigma_*B'$. If $n$ is sufficiently large, then
$$\sigma^*H \leq  nE + B', ~\mathrm{thus}~\kappa(X, K_X + B) \geq \kappa(W, H).$$

Fix a rational number $t >0$ such that $(W, tH)$ is klt. Running an LMMP for $K_W + tH$ over $Y'$, after finitely many flips and divisorial contractions
$$(W, tH) = (W_0, tH_0) \dashrightarrow (W_1, tH_1) \dashrightarrow \cdots \dashrightarrow (W_n, tH_n) = (W', tH'),$$
we get a relative minimal model $(W', tH')$ and a fibration $g': W' \to Y'$.
By induction, for every step above the extremal ray is $K_{W_i}$ -trivial, thus $K_{W'} \sim_{\Q} 0$, and the geometric generic fiber of $g'$ has canonical singularities.
Applying Theorem \ref{rel-mmp} (3.4) we conclude that $H' \sim_{\Q} K_{W'}+ H'$ is nef.

By Corollary \ref{div-k=0}, we have that $\kappa(W', H') \geq 1$ unless $H' = 0$. If $H' > 0$, then we are done by
$$\kappa(X, K_X + B) \geq \kappa(W, H) = \kappa(W', H' )\geq 1.$$
If $H' = 0$, consider the map $\sigma': X'\to W'$ which may be assumed to be a morphism by blowing up $X'$. Then $\sigma'_*B'= 0$, and thus $K_{X'} + B' \sim E + B'$ is $\sigma'$-exceptional.
Take an effective $\Q$-divisor $D'\sim_{\Q} \pi'^*(K_X +B)$. Then $\mathrm{Supp}(D') \subset \mathrm{Supp}(E+B') $, thus $D'$ is $\sigma'$-exceptional. Combining the nefness of $D'$, we get that $D'=0$. Therefore $K_X +B\sim_{\Q} 0$, which completes the proof in this case.

\subsection{The case $\dim a_X(X) = 2$.}\label{albdim=2}
In this case, by Theorem \ref{reduction} we know that the fibration $f: X\to Y$ is separable.
By Corollary \ref{Iit-cor}, $\kappa(Y) = 0$, and the smooth model of geometric generic fiber $X_{\ol\eta}$ is a smooth elliptic curve. We see that $Y$ is birational to an abelian surface $A$, and since $Y \to A_X$ is finite, in turn, by the universality of Albanese maps we conclude that $Y = A = A_X$. For some Frobenius iteration $F_A: A^e \to A$ and a smooth resolution $X' \to X\times_A A^e$, the fibration $f': X' \to A^e$ has smooth geometric generic fiber. The proof of Corollary \ref{Iit-cor} also shows that $\kappa(X') = 0$, which combining Theorem \ref{Iit-conj} (iv) gives that $\mathrm{var}(f') = 0$. Therefore, there exists a generically finite surjective morphism $\pi: Z \to A$ with $Z$ smooth and projective, such that $X\times_{Y} Z$ is birational to $Z\times C$.

In the following by running an LMMP we can assume $K_X + B$ is nef and there exists an effective divisor $D \sim_{\Q} K_X +B$. We fall into two cases below.

\textbf{Case $\kappa(X_{\eta}, (K_X + B)_{\eta}) = 0$}: In this case applying \cite[Proposition 2.1]{Ka85I} and flattening trick (see also \cite[Lemma 5.6]{BW14}), we get the following commutative diagram
$$\xymatrix{&X'\ar[r]^{\phi}\ar[d]_{f'} &X\ar[d]^f\\
&Y'\ar[r]^{\psi} &A}$$
where $X'$ and $Y'$ are both smooth and projective, $\phi, \psi$ are birational morphisms such that,
there exists a $\mathbb{Q}$-divisor $H'$ on $Y'$ satisfying that $\phi^*(K_X+B) \sim_{\mathbb{Q}} f'^*H'$. Note that $K_{Y'} \sim E$ for some effective $\psi$-exceptional divisor $E$ containing every $\psi$-exceptional component.

We can assume $H'$ is effective. If $H' = 0$ then we are done. If $H' > 0$, then
$\psi_*H' > 0$ since $H'$ is nef. Since $\phi: X' \to X$ is birational, there exist two effective $\phi$-exceptional divisors $E_1', E_2'$ such that $K_{X'} + E_2' \sim_{\mathbb{Q}} \phi^*K_X + E_1'$.
Applying Theorem \ref{ct}, we can conclude that
\begin{align*}
\kappa(X, K_X + B) & = \kappa(X', \phi^*(m+1)(K_X + B))  \hspace{1cm} \text{for every $m>0$}\\
                   & \geq \kappa(X', \phi^*(K_X + B) + \phi^*mK_{X}) \\
                   & = \kappa(X', \phi^*(K_X + B) + m(\phi^*K_{X} + E_1')) \\
                   & \geq \kappa(X', \phi^*(K_X + B) + mK_{X'})  \\
                   & = \kappa(X', f'^*H' + mf'^*K_{Y'} + mK_{X'/Y'}) \\
                   & \geq \kappa(Y', H' + mK_{Y'})  \hspace{1cm} \text{since $\kappa(X',K_{X'/Y'}) \geq 0$ by Theorem \ref{rel-can-ellfib}}\\
                   & = \kappa(Y', H' + mE).
\end{align*}
For sufficiently large $m$ we have $\kappa(Y', H' + mE) = \kappa(A, \psi_*H') > 0$.

\textbf{Case $\kappa(X_{\eta}, (K_X + B)_{\eta}) = 1$}: In this case $D$ has a horizontal component $S$ over $Y$.
Consider the following commutative diagram
$$ \xymatrix@C=1.5cm{&W\ar[rrd]^{\rho}\ar[rd] \ar[d]^{\sigma}\ar@/_3pc/[dd]_{g} &  & \\
&W_1=Z\times C \ar[d]^{h} &W_2= X\times_{Y} Z \ar[r]\ar[d] &X\ar[d]^{f} \\
&Z\ar[r]^{{\mathrm{id}_Z}} & Z \ar[r]^{\pi} &Y=A }$$
where $W$ is a common smooth resolution of $W_1$ and $W_2$. Note that $K_{W/Z} \sim E_1$ for some effective $\sigma$-exceptional divisor $E_1$. And applying Proposition \ref{compds} there exists an effective divisor $E_2$ on $W$ exceptional over $X$, such that
$$E_1 \sim K_{W/Z} \leq \rho^*K_{X/A} + E_2 = \rho^*K_{X} + E_2.$$
Up to a further base change, we can assume $\sigma_*\rho^*S$ contains a section $T$ of $h$.
We conclude
\begin{align*}
\kappa(X, K_X + B) & = \kappa(X, 2K_X + B)   \\
                   & = \kappa(W, \rho^*(K_X + B) + \rho^*K_{X}+ E_2) \hspace{1cm} \text{by Theorem \ref{ct}} \\
                   & \geq \kappa(W, \rho^*(K_X + B) + E_1)  \\
                   & \geq \kappa(W, \rho^*D + E_1) = \kappa(W, \rho^*D + mE_1) \hspace{1cm} \text{for every $m>0$}\\
                   & = \kappa(W_1, \sigma_*\rho^*D) \hspace{1cm} \text{since $E_1$ contains every $\sigma$-exceptional component} \\
                   & \geq \kappa(W_1, T).
\end{align*}
Then since $T$ is a section $t: Z \to W_1$, we can get a projection $h': W_1 \to \mathrm{Pic}^0(C) \cong C$ by $(z, c) \mapsto c - p_2(t(z))$ where $p_2: W_1 = Z \times C \to C$ denotes the projection to the second factor. This gives another trivialization and $T$ is a fiber of $h'$. It follows easily that $\kappa(W_1, T) >0$.

\subsection{The case $\dim a_X(X) = 3$}\label{albdim=3}
Applying Theorem \ref{reduction} again, we can assume $a_X: X \rightarrow A_X$ is a separable surjective morphism.

If $A_X$ is simple, then $a_X: X \to A_X$ is a birational morphism by Theorem \ref{ch-of-ab-kx}. Considering the divisor $H = a_{X*} B$ and arguing as in the last paragraph of Sec. \ref{albdim=1}, we complete the proof.

If $A_X$ is not simple, then there exists a fibration $q: A_X \to A'$ where $A'$ is an elliptic curve or an abelian surface. We can assume there is no abelian variety between $A_X$ and $A'$, thus $q$ is separable (cf. \cite[Theorem 4]{Se58}). Considering the composite morphism $X\to A'$, the theorem can be proved by the same argument as in Sec. \ref{albdim=1} and \ref{albdim=2}.

\end{document}